\newcommand{\bbf}{\mathbbm}
\setlist{noitemsep}
\setlist{nosep}
\definecolor{link}{rgb}{0,0,0}
\definecolor{cit}{rgb}{0,0,0.6}
\definecolor{cite}{named}{cit}
\titleformat{\section}{\center\large\bf}{\thesection.}{.7ex}{}
\titlespacing{\section}{0pc}{4ex plus .5ex minus .5ex}{2ex minus .1ex}
\titleformat{\subsection}{\center\bf}{\thesubsection.}{.7ex}{}
\titlespacing{\subsection}{0pc}{2ex plus .1ex minus .2ex}{2ex minus .1ex}
\newtheorem{theorem}{Theorem}
\newtheorem{lemma}[theorem]{Lemma}
\theoremstyle{definition}
\newtheorem{remark}{Remark}
\newtheorem{example}{Example}
\newcommand{\op}[1]{{\varmathbb #1}}
\newcommand{\PP}{{\op P}}
\newcommand{\EE}{{\op E}}
\newcommand{\pp}[1]{{\op P}(#1)}
\newcommand{\ee}[1]{{\op E}(#1)}
\newcommand{\e}{{\rm e}}
\newcommand{\la}{\lambda}
\newcommand{\LN}[1]{N_{#1}}
\newcommand{\LD}[1]{D_{#1}}
\newcommand{\LL}[1]{\Phi_{#1}}
\newcommand{\sq}[1]{\op #1}
\newcommand{\sqNN}{\sq N_0}
\newcommand{\seq}[1]{\{#1_i\}_{i\in\sqNN}}
\newcommand{\deq}{{=}_{\rm d}}
\newcommand{\dto}{\Rightarrow}
\newcommand{\VV}{V}
\newcommand{\QQ}[1]{\op Q[#1]}
\newcommand{\re}[1]{\text{\rm Re}\,#1}
\newcommand{\veps}{\varepsilon}
\newcommand{\MW}{M}
\newcommand{\aplus}[1]{\left[#1\right]^+}
\newcommand{\aminus}[1]{\left[#1\right]^-}
\newcommand{\fexp}[1]{{\rm exp}\left(#1\right)}
\newcommand{\afrac}[2]{#1/#2}
\newcommand{\WA}{W^\ast}
\newcommand{\gen}[1]{U_{#1}}
\newcommand{\abs}[1]{|#1|}
\newcommand{\VM}{V^-}
\newcommand{\ind}[1]{{\bbf 1}_{\{#1\}}}
\newcommand{\kla}[1]{\left(#1\right)}
\newcommand{\bkla}[1]{\big(#1\big)}
\newcommand{\Bkla}[1]{\Big(#1\Big)}
\newcommand{\al}{\alpha}
\newcommand{\alf}{^{(\alpha)}}
\begin{document}

\hfill{\small\itshape Draft, \today}
\vspace{4ex}
\begin{center}
\parbox{0.87\textwidth}{\huge
\center
\Large\bf A Multiplicative Version of the Lindley Recursion}\\[4ex]
{\sc Onno Boxma\\Andreas L\"opker\\Michel Mandjes\\Zbigniew  Palmowski}
\end{center}
\vspace{2ex}

\begin{abstract} \noindent This paper presents an analysis of the  stochastic recursion $W_{i+1} = [V_iW_i+Y_i]^+$ that can be interpreted as an autoregressive process of order 1, reflected at 0. We start our exposition by a discussion of the model's stability condition. Writing $Y_i=B_i-A_i$, for independent sequences of non-negative i.i.d.\ random variables $\seq A$ and $\seq B$, and assuming $\seq V$ is an i.i.d.\ sequence as well (independent of $\seq A$ and $\seq B$), we then consider three special cases: (i)~$V_i$ attains negative values only
and $B_i$ has a rational LST, (ii) $V_i$ equals a positive value $a$ with certain probability
$p\in (0,1)$ and is negative otherwise, and  both $A_i$ and $B_i$ have a rational LST,
(iii) $V_i$ is uniformly distributed on $[0,1]$, and $A_i$ is exponentially distributed.
In all three cases we derive transient and stationary results,
where the transient results are in terms of the transform at a geometrically distributed epoch.
\\

\noindent
{\bf AMS Subject Classification (MSC2010).} Primary: 60K25;
Secondary: 90B22.
\\

\noindent
{\bf Affiliations.} O. Boxma is with Eurandom and the Department of Mathematics and Computer Science; Eindhoven University of Technology; P.O. Box 513, 5600 MB Eindhoven; The Netherlands ({\it email}: {\tt  \footnotesize o.j.boxma@tue.nl}).
A. L\"opker is with HTW Dresden,
University of Applied Sciences,
Friedrich-List-Platz 1,
D-01069 Dresden; Germany ({\it email}: {\tt \footnotesize lopker@htw-dresden.de}).
M. Mandjes is with Korteweg-de Vries Institute for Mathematics, University of Amsterdam; Science Park 904, 1098 XH Amsterdam; The Netherlands ({\it email}: {\tt \footnotesize m.r.h.mandjes@uva.nl}).
Z. Palmowski is with Department of Applied Mathematics, Faculty of Pure and Applied Mathematics,
Wroc\l aw University of Science and Technology, Wyb. Wyspia\'nskiego 27, 50-370 Wroc\l aw, Poland
({\it email}: {\tt \footnotesize zbigniew.palmowski@pwr.edu.pl}).
\\

\noindent
{\bf Acknowledgments.}   The research of Boxma and Mandjes is partly funded by the NWO Gravitation Programme NETWORKS (Grant Number 024.002.003) and an NWO Top Grant (Grant Number 613.001.352). The research of Palmowski is partially supported by Polish National Science Centre Grant No. 2018/29/B/ST1/00756, 2019-2022.

\end{abstract}

\newpage

\section{Introduction}
This paper focuses on the Lindley type stochastic recursion
\begin{equation}
W_{i+1} = [V_iW_i+Y_i]^+, ~~~i=0,1,\dots,
\label{mainsr}
\end{equation}
where $\aplus x=\max\{x,0\}$ for $x\in\sq R$
and $\seq \VV$ and $\seq Y$ are independent sequences of i.i.d. (independent, identically distributed) random variables.
The analysis of stochastic recursions has received much attention in the applied probability literature.
This holds in particular for stochastic recursions of the autoregressive type, owing to their wide applicability
across various scientific domains including biology, finance, and engineering \cite{BMR, AB, embr}.

An important subclass of first order autoregressive models corresponds to the case in which the $\seq V$ are constant, i.e., a stochastic process defined through the recursion
\begin{equation}\label{Wi}
W_{i+1} = aW_i +Y_i, ~~~i=0,1,\dots,
\end{equation}
for a sequence of i.i.d.\ random variables $\seq Y$ and a scalar $a$, with $W_0$ being given.
When the quantities $W_i$ cannot attain negative values, then it becomes natural to study the truncated counterpart of \eqref{Wi}, i.e.,
 the recursion
\begin{equation}
W_{i+1} = [aW_i +Y_i]^+, ~~~i=0,1,\dots .
\label{mainspec}
\end{equation}
When $a=1$ we recover the classical Lindley recursion describing the waiting time in the G/G/1 queue, with $Y_i$ representing the difference between the $i$-th
service time and the ($i+1$)-st interarrival time.
The case of $a \in (0,1)$ was studied  in detail in  \cite{BMR}, whereas the case $a=-1$ is covered by  \cite{VlasiouPhD}. It should be observed that, while from the analysis it is clear that $a$ is assumed to be positive in \cite{BMR},
the introduction of that paper incorrectly states that $|a| < 1$.

By studying (\ref{mainsr}), we significantly extend the analysis of  the Lindley recursion as well as the analysis of the stochastic recursion (\ref{mainspec}).
Our results focus on three different choices of $\{V_i\}$.
In Model I, the $V_i$  are negative random variables. A detailed analysis is shown to be possible as long as the positive part of the $Y_i$
has a rational Laplace-Stieltjes transform (in the sequel abbreviated to LST).
In Model II, the $V_i$ are either negative or equal to the positive constant $a$.
Here we demand that both the positive and the negative part of the $Y_i$ have a rational LST.
While Model II effectively contains Model I, we prefer to give a separate analysis of both models, to make the reader familiar
with the specific mathematical intricacies due to $V_i$ being negative (Model I) and $V_i$ being a positive constant (the model in \cite{BMR}).
Finally, in Model III, the $V_i$ are uniformly distributed on $[0,1]$, and the negative part of the $Y_i$ is exponentially distributed; this case requires an entirely different approach.

The rationality assumptions are natural in the light of the existing theory that has been developed for the G/G/1 queue.
While in principle the waiting-time distribution in the general G/G/1 queue can be obtained via a Wiener-Hopf decomposition (cf.\ \cite[Chapter II.5]{cohen}),
the solution is a rather implicit one, unless one makes rationality assumptions on either the interarrival or the service-time LST. In addition, it can be argued that the distribution of any non-negative random variable can be approximated arbitrarily closely by the distribution of a random variable with a rational LST \cite[Ch.\ III]{asm}, so that a restriction to random variables with rational LST leads to just a minor loss of generality.

Notable studies of  stochastic recursions are \citet{Foss1, seva}, and \citet{diaconis}; see in addition \cite{bor1}.
For the non-reflected case, stochastic recursions of the form $W_{i+1} = \VV_iW_i + Y_i$ have  been studied frequently, partly under the name
`Vervaat perpetuity'; we mention \cite{AB,devroye, embr, horst, kesten, vervaat}.
For the reflected case,
\cite{WOM} considers another generalization of the Lindley recursion, by replacing $V_iW_i$ in (\ref{mainsr}) by $S(W_i)$, where $\{S(t)\}_{t\geqslant 0}$ is a L\'evy subordinator. A model that is similar to the present model has been discussed in \citet{whitt}. It is noted, though, that \cite{whitt} primarily focuses on stability questions,  limit theorems and questions related to  queuing applications, whereas our primary focus lies on the derivation of results for the  transient and stationary distribution of the process under investigation.
Also related is the model in \cite{BoxmaVlasiou}; there (\ref{mainsr}) is considered with $\PP(V_i=1)=p$, $\PP(V_i=-1)=1-p$.

The main contributions of the present paper are the following.
For each of the three models that we introduced above, we state and solve a Wiener-Hopf boundary value problem,
which allows us to study the transient behavior of the $\seq W$ process.
In particular, we obtain an expression for the object
\[\sum_{i=0}^{\infty} r^i \,\ee{{\rm e}^{-sW_i}},\]
which can be interpreted as the generating function of the LST of
the $W_i$, but also (up to the multiplicative constant $1-r$) as the LST after a geometrically distributed time.
The stability condition of each of the three models is discussed, and the steady-state distribution of the $\seq W$ process is also determined.

The remainder of the paper is organized as follows.
Section~\ref{modeldes} presents the description of the three models and some preliminaries.
Sections~\ref{Case1}, \ref{Case2} and \ref{Case3} are devoted to the transient and steady-state analysis of, respectively,  Models I, II and III.
Section~\ref{concl} contains a discussion and provides  suggestions for further research.

\section{Model description and preliminaries}
\label{modeldes}
The main object of study is  the stochastic recursion
\begin{align}
W_{i+1} = \aplus{\VV_iW_i + Y_i}, \quad i=0,1,\dots,
\label{recur1}
\end{align}
where $\seq \VV$ and $\seq Y$ are sequences of  i.i.d.\  random variables, which are in addition independent of each other. The initial state of the process is assumed to be $W_0=w\in\sq R^+$. We write $\VV$ and $Y$ for generic random variables distributed as $\VV_0$ and $Y_0$ respectively.

In this paper, we discuss the following three variants of the model:
\begin{center}
\begin{tabular}{ll}
Model I: & $\PP(V<0)=1;$\\
Model II:& $\PP(V=a)=p,\ \PP(V<0)=1-p,\ a>0,\ p\in(0,1)$;\\
Model III:& $\PP(V<x) = x, \ 0 \leqslant x \leqslant 1$.
\end{tabular}
\end{center}
In each of the cases we will assume that the $Y_i$ are decomposed as $B_i-A_i$, with sequences $\seq{A}$ and $\seq{B}$ of i.i.d.\ {\it non-negative} random variables. In addition, depending on the chosen model, the random variables $A_i$ and/or $B_i$ are assumed to have a rational LST.

We start with investigating the stationary behavior of $\seq W$. We always assume that both $\EE|V|$ and   $\EE|Y|$ are finite. The first result was given in \citet{whitt} and covers most cases of interest.

\begin{theorem}[\citet{whitt}]
\label{Theorem1}
If one of the following conditions holds, then $W_i$ tends weakly to  a proper limit $W$ as $i\to\infty$:
\begin{enumerate}[label={\rm(C$_\arabic*$)}]
\item $\pp{V<0}>0$ and $\pp{Y\leqslant 0}>0$, \label{w1}
\item $V\geqslant 0$ a.s. and $\pp{V=0}>0$, \label{w2}
\item $V>0$ a.s. and $\ee{\log|V|}<0$.\label{w3}
\end{enumerate}
Moreover, $W_i$ converges weakly to a possibly improper limit $W$ as $i\to\infty$ if
$V>0$ a.s., $\ee{\log|V|}=0$, and $W_0=0$. If additionally $V=1$ a.s. then $W$ is proper for $\ee{Y}<0$ and improper for $\ee{Y}>0$.
\end{theorem}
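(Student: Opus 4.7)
The plan is to handle the three sufficient conditions separately, via a regenerative argument in cases (C1)--(C2) and a Loynes-type backward coupling in case (C3).

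Under (C1), whenever $V_i<0$ and $Y_i\leq 0$ happen simultaneously one has $V_iW_i+Y_i\leq 0$ (since $W_i\geq 0$), so $W_{i+1}=0$. Independence of $V_i$ and $Y_i$ makes this event, of probability $\alpha:=\pp{V<0}\pp{Y\leq 0}>0$, occur independently at each step; hence the return time of $\seq W$ to $\{0\}$ is stochastically dominated by a geometric$(\alpha)$ variable. The chain is therefore positive Harris recurrent with atom $\{0\}$, finite mean return time, and aperiodic (the self-loop at $0$ has probability $\geq\alpha$), so it converges in total variation---and a fortiori weakly---to a proper stationary distribution. Under (C2) the same idea works with the regeneration event $\{V_i=0\}$ of probability $q:=\pp{V=0}>0$, at which $W_{i+1}=\aplus{Y_i}$ is freshly distributed as $\aplus Y$ independently of the past.

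For (C3) the plan is to deploy a Loynes backward scheme. Set $f_i(x):=\aplus{V_ix+Y_i}$, which, since $V_i>0$, is non-decreasing in $x$ and satisfies the pointwise bound $f_i(x)\leq V_ix+\aplus{Y_i}$ for every $x\geq 0$. Reversing indices and using stationarity of $\seq V$ and $\seq Y$, the backward iterates
\[ Z_n\,:=\,f_0\circ f_1\circ\cdots\circ f_{n-1}(0) \]
have the same law as $W_n$ started from $W_0=0$; moreover monotonicity of the $f_i$ together with $f_n(0)=\aplus{Y_n}\geq 0$ makes $Z_n$ almost surely non-decreasing in $n$. Iterating the upper bound yields
\[ Z_n\,\leq\,\sum_{k=0}^{n-1}\Bigl(\prod_{j=0}^{k-1}V_j\Bigr)\aplus{Y_k}, \]
and under $\ee{\log|V|}<0$ the SLLN forces $\prod_{j=0}^{k-1}V_j\to 0$ exponentially fast a.s., while $\EE|Y|<\infty$ implies $\ee{\log^+|Y|}<\infty$; these are the classical Vervaat-perpetuity hypotheses, so the series on the right converges almost surely as $n\to\infty$. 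Hence $Z_\infty:=\lim_nZ_n$ is a.s.\ finite, providing a proper weak limit of $\seq W$ from $W_0=0$; a sandwich argument based on monotonicity of the $f_i$ and $V_0\cdots V_{n-1}w\to 0$ a.s.\ extends the conclusion to an arbitrary initial state $W_0=w$.

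The last two assertions drop out of the same backward scheme. When $\ee{\log|V|}=0$ the monotone sequence $Z_n$ still converges almost surely, but possibly to $+\infty$, giving a weak limit in $[0,\infty]$ that may be improper. When $V\equiv 1$ the recursion is the classical Lindley recursion for G/G/1, for which Loynes' theorem yields a proper limit iff $\EE Y<0$, whereas $\EE Y>0$ forces $W_n\to\infty$ a.s.\ by the SLLN applied to the partial sums of $Y$. The main obstacle will be the perpetuity estimate in (C3): verifying a.s.\ convergence of the upper-bound series under only $\EE|Y|<\infty$ and $\ee{\log|V|}<0$, and confirming that the telescoping bound indeed survives the repeated $\aplus{\cdot}$ truncations. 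The remaining pieces reduce to standard Markov-chain or classical Lindley arguments once the regenerative structure or the backward coupling is in place.
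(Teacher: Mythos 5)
The paper itself contains no proof of this theorem: it is imported verbatim from \citet{whitt}, so there is nothing in-text to compare against line by line. Judged on its own, your argument is correct and follows the standard route (and essentially the spirit of Whitt's original proof). For (C$_1$) and (C$_2$), the observation that on $\{V_i<0,\,Y_i\leqslant 0\}$ (resp.\ $\{V_i=0\}$) one gets $W_{i+1}=0$ (resp.\ $W_{i+1}=\aplus{Y_i}$, freshly drawn) \emph{irrespective of} $W_i$, and that these events are i.i.d.\ across $i$ with positive probability, does yield regeneration epochs with geometrically dominated and aperiodic (mass at $1$) cycle lengths from any initial state, hence total-variation convergence to a proper, unique limit; this is consistent with the paper's remark after the theorem that the regenerative structure gives uniqueness in these two cases. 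For (C$_3$), the Loynes scheme is sound: the distributional identity $Z_n\deq W_n$, the monotone increase of $Z_n$, and the bound $Z_n\leqslant\sum_{k<n}\bigl(\prod_{j<k}V_j\bigr)\aplus{Y_k}$ all check out, and the perpetuity series converges a.s.\ by the root test since $n^{-1}\sum_{j<n}\log V_j\to\ee{\log V}<0$ (SLLN) while $n^{-1}\log^+\aplus{Y_n}\to 0$ (Borel--Cantelli, using the paper's standing assumption $\EE|Y|<\infty$, hence $\ee{\log^+|Y|}<\infty$); these two lines are the only place a gap could have hidden and should be written out. The extension to arbitrary $W_0=w$ via $|W_n^w-W_n^0|\leqslant w\prod_{j<n}V_j\to 0$ a.s., the boundary case $\ee{\log V}=0$ (monotone limit in $[0,\infty]$, possibly improper), and the $V\equiv 1$ case (Loynes, plus $W_n\geqslant\sum_{k<n}Y_k\to\infty$ when $\EE Y>0$) are all fine. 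One comparison worth making: your (C$_3$) argument parallels how the paper proves its own Theorem~\ref{theo2}, except that there the authors simply invoke the Diaconis--Freedman random-iteration theorem with random Lipschitz constant $|V|$; that route gives existence, uniqueness and convergence in one stroke, whereas your explicit perpetuity bound is more hands-on and has the advantage of also delivering the improper-limit statements at the $\ee{\log V}=0$ boundary, which the contraction theorem does not address.
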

It follows straightforwardly from the regenerative structure of $W_i$, and the proof of the above theorem in
\citet{whitt}, that in cases \ref{w1} and \ref{w2} the limit $W$ is unique.
Obviously under the conditions of the theorem the limiting  random variable $W$ fulfils the associated distributional identity $W\deq \aplus{\VV W + Y}$.
Regarding the above Condition \ref{w3}, we add the following observation.

\begin{theorem}\label{theo2} In order to have convergence of $W_i$ to a proper unique limit $W$ as $i\to\infty$, it is sufficient to have
 $\ee{\log|V|}<0$, which in turn is implied by $\EE\abs{V}<1$.
\end{theorem}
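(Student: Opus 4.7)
The plan is to exploit the contractive nature of the map $x\mapsto\aplus{Vx+Y}$ via a backward-coupling argument, which is the standard route for Vervaat-type perpetuities adapted to the reflected setting. The second implication is dispatched at once: since $\log$ is concave, Jensen's inequality gives $\ee{\log\abs V}\leqslant\log\EE\abs V<0$, the left-hand side being well-defined in $[-\infty,0)$ because $\EE\log^+\abs V\leqslant\EE\abs V<\infty$ (with the usual convention $\log 0=-\infty$).

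For the convergence itself, set $F_i(x):=\aplus{V_ix+Y_i}$, so that $W_n^{(w)}=F_{n-1}\circ\cdots\circ F_0(w)$. Each $F_i$ is $\abs{V_i}$-Lipschitz because $\aplus{\cdot}$ is $1$-Lipschitz. The pairs $(V_i,Y_i)$ being i.i.d., reversing the order of composition preserves the law, so
\[
W_n^{(w)}\ \deq\ R_n^{(w)}:=F_0\circ F_1\circ\cdots\circ F_{n-1}(w),
\]
and I would prove that $R_n^{(w)}$ converges \emph{almost surely}; this immediately yields convergence of $W_n^{(w)}$ in distribution to the same limit.

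The core estimate is the telescoping Cauchy bound obtained by peeling off the outer $n$ maps:
\[
\bigl|R_{n+1}^{(w)}-R_n^{(w)}\bigr|\leqslant \abs{V_0V_1\cdots V_{n-1}}\cdot\bigl|F_n(w)-w\bigr|\leqslant \abs{V_0\cdots V_{n-1}}\bigl(\abs{V_n}\,w+\abs{Y_n}+w\bigr).
\]
By the SLLN applied to $\log\abs{V_i}$ (valid since $\EE\log^+\abs V<\infty$), $n^{-1}\log\abs{V_0\cdots V_{n-1}}\to\ee{\log\abs V}<0$ almost surely, so $\abs{V_0\cdots V_{n-1}}$ decays at least exponentially. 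The Borel-Cantelli lemma, together with $\EE\log^+\abs Y<\infty$ (implied by $\EE\abs Y<\infty$), further forces $\abs{Y_n}=o(\e^{\veps n})$ a.s.\ for every $\veps>0$. Choosing $\veps$ smaller than the exponential decay rate makes the right-hand side a.s.\ summable, so $R_n^{(w)}$ is a.s.\ Cauchy and converges to an a.s.\ finite limit $W$. Uniqueness follows from the same contraction applied to two initial values $w,w'$:
\[
\bigl|R_n^{(w)}-R_n^{(w')}\bigr|\leqslant \abs{V_0\cdots V_{n-1}}\cdot\abs{w-w'}\longrightarrow 0,
\]
so the limit does not depend on the starting state.

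The main obstacle is a bookkeeping matter rather than a conceptual one: one must confirm that the product--sum interplay in the Cauchy bound remains summable despite the unbounded random forcing $\seq Y$. This is the familiar tension in perpetuity theory, resolved by the fact that $\log^+\abs Y$ has finite mean, so that $n^{-1}\log^+\abs{Y_n}\to 0$ a.s., which is comfortably dominated by the exponential decay of $\abs{V_0\cdots V_{n-1}}$.
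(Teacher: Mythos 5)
Your argument is correct, and at its core it is the same mechanism the paper invokes: the recursion is a random iteration of Lipschitz maps $x\mapsto\aplus{V_ix+Y_i}$ with random Lipschitz constant $\abs{V_i}$, and $\ee{\log\abs V}<0$ makes the composition contracting on average. The difference is that the paper simply verifies the Lipschitz property and then cites Theorem~1.1 of Diaconis and Freedman, whereas you reprove that theorem's convergence statement from scratch via the backward (Letac-type) iteration: the exchange of forward and backward compositions, the telescoping Cauchy bound $\abs{R_{n+1}^{(w)}-R_n^{(w)}}\leqslant\abs{V_0\cdots V_{n-1}}(\abs{V_n}w+\abs{Y_n}+w)$, the SLLN for $\log\abs{V_i}$, and the Borel--Cantelli control of $\log^+\abs{Y_n}$. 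What your self-contained route buys is transparency about the hypotheses actually used, namely $\EE\log^+\abs V<\infty$ and $\EE\log^+\abs Y<\infty$, both of which are covered by the paper's standing assumption that $\EE\abs V$ and $\EE\abs Y$ are finite; the paper's citation buys brevity and, via Diaconis--Freedman, additional information (geometric convergence in the Prokhorov metric) that is not needed here. Two small points you may wish to make explicit: if $\PP(V=0)>0$ then $\log\abs V=-\infty$ with positive probability, but then the partial products vanish eventually and the Cauchy bound holds trivially, so the argument survives; and your final display shows that the a.s.\ limit of the backward iterates is independent of the initial state, which gives uniqueness of the weak limit of $W_n^{(w)}$ over deterministic starting points --- if you also want uniqueness of the stationary law, add the standard one-line remark that starting the backward scheme from an independent stationary initial condition identifies any stationary distribution with the law of $W$. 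The Jensen step for $\EE\abs V<1\Rightarrow\ee{\log\abs V}<0$ matches the paper exactly.
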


\begin{proof} Recursion \eqref{recur1} can be written as a random iteration $W_{i+1}=f_{\theta_i}(W_i)$ with $f_{\theta_i}(x)=\aplus{xV_i+Y_i}$, $\theta_i=(V_i,Y_i)$. This means that $f_\theta(\cdot)$ enjoys the Lipschitz property
\begin{align*}
|f_{\theta}(x)-f_{\theta}(y)|\leqslant K_\theta|x-y|,
\end{align*}
with random Lipschitz constant $K_\theta=|V|$. As a result, \citet[Thm.\ 1.1]{diaconis} is applicable. By Jensen's inequality $\EE \log |V|\leqslant \log\EE|V|$ and so $\EE|V|<1$ implies condition $\ee{\log|V|}<0$.
\end{proof}

The case where $\pp{V<0}>0$ and $Y\geqslant 0$ a.s., which was omitted in \citet{whitt}, is more involved due to the fact that the process might not be aperiodic, even if $Y$ is not deterministic. As an example suppose that the distribution of $Y$ is supported on $[1,2]$ and that $V\leqslant -2$ a.s. If $W_0=0$, then $W_1\in[1,2]$, $W_2=0$, $W_3\in[1,2]$, entailing that the process alternates between the set $\{0\}$ and a value in $[1,2]$. On the other hand, if $W_0>2$, then $W_1=0$, $W_2\in[1,2]$ and so on. As a consequence, there is no convergence $W_i\dto W$ as $i\to\infty$. However, regarding the existence of a stationary distribution we can show the following.

\begin{theorem}
If $\pp{V\leqslant 0}>0$ and $Y\geqslant 0$ a.s., then there is convergence of $W_i$ to a  stationary random variable $W$  as $i\to\infty$.
\end{theorem}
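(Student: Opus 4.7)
The plan is to construct a stationary random variable $W$ satisfying $W \deq \aplus{VW+Y}$ and to obtain weak convergence $\mathcal{L}(W_i) \dto \mathcal{L}(W)$ via tightness, Feller continuity of the transition kernel, and a uniqueness argument for the invariant law.

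I would start by establishing tightness of $\{\mathcal{L}(W_i)\}$. Since $\pp{V\leqslant 0}>0$ and $Y\geqslant 0$ a.s., on the event $\{V_i\leqslant 0\}$ one has $V_iW_i\leqslant 0$ and therefore $W_{i+1}=\aplus{V_iW_i+Y_i}\leqslant Y_i$. In the principal case $V\leqslant 0$ a.s., this bound is pathwise, so $W_i$ is stochastically dominated by $Y$ for every $i\geqslant 1$; $\EE Y<\infty$ then yields tightness. In the general regime with $V$ possibly positive, the random resets occurring with rate $q:=\pp{V\leqslant 0}>0$ combine with a Foster--Lyapunov drift inequality, using $\EE|V|<\infty$, to keep $\EE W_i$ uniformly bounded and thus give tightness as well.

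Next, continuity of $w\mapsto\aplus{Vw+Y}$ for each realisation of $(V,Y)$ implies the Feller property of the transition kernel $P(w,A):=\pp{\aplus{Vw+Y}\in A}$. By a Krylov--Bogoliubov-type extraction applied to the tight family $\{\mathcal{L}(W_i)\}$, any subsequential weak limit $\pi$ is invariant, $\pi=\pi P$. Taking $W\sim\pi$ gives the desired stationary random variable and the distributional identity $W\deq\aplus{VW+Y}$. Uniqueness of $\pi$ is obtained through a regeneration argument at $\{0\}$: the reset mechanism ensures that the chain visits $\{0\}$ infinitely often, and coupling two stationary copies driven by the same innovations $(V_i,Y_i)$ forces them to coincide from the first common regeneration epoch onwards. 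With $\pi$ unique, every subsequence of $\{\mathcal{L}(W_i)\}$ has a further weakly convergent subsubsequence with the same limit $\pi$, which gives $\mathcal{L}(W_i)\dto\pi=\mathcal{L}(W)$.

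The main obstacle is precisely the potential periodicity of the chain illustrated by the $V\leqslant -2$, $Y\in[1,2]$ configuration preceding the theorem, where the chain alternates deterministically between $\{0\}$ and $[1,2]$ and the marginal laws fail to have a single weak limit; in such degenerate cases the convergence must be interpreted in the Cesàro sense (to which the uniqueness argument is still applicable), while in the generic aperiodic case full weak convergence $\mathcal{L}(W_i)\dto\mathcal{L}(W)$ is obtained. Dealing carefully with this dichotomy — possibly by a separate case analysis based on the atomic structure of $V$ and $Y$ — is where most of the work in the proof will go.
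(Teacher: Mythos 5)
Your proposal correctly identifies the periodicity obstruction and handles the special case $V\leqslant 0$ a.s.\ exactly as one should (pathwise domination $W_{i+1}\leqslant Y_i$ plus $\EE Y<\infty$), but it has a genuine gap in the general case $\pp{V\leqslant 0}>0$, and it also tries to prove more than the paper actually establishes.

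The gap is the tightness step when $V$ can be positive. You invoke a Foster--Lyapunov drift to claim $\EE W_i$ stays uniformly bounded ``using $\EE|V|<\infty$''. That does not follow: the natural drift inequality is $\EE W_{i+1}\leqslant \EE\aplus{V}\,\EE W_i + \EE Y$, which is a contraction only when $\EE\aplus{V}<1$, and that is not assumed here. If $\EE\aplus{V}\geqslant 1$ (or is infinite), $\EE W_i$ may well diverge, so a first-moment Lyapunov bound cannot deliver tightness. The paper gets around this with a majorizing process: set $M_0=W_0$ and $M_{i+1}=\aplus{V_i}M_i+Y_i$, so $W_i\leqslant M_i$ pathwise. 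Because $\pp{\aplus{V}=0}=\pp{V\leqslant 0}>0$, one has $\EE\log\aplus{V}=-\infty<0$, and Theorem~\ref{theo2} (the random Lipschitz iteration result of \citet{diaconis}) gives $M_i\dto M$ for a proper limit $M$ \emph{in distribution} even when $\EE M_i$ is unbounded. Tightness of $\{\pp{W_i\in\cdot}\}$ then follows from $W_i\leqslant M_i$. This is the step your proposal is missing; the logarithmic criterion, not the first-moment drift, is what makes the general case go through.

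A secondary issue is scope. The paper's proof stops at tightness plus the Feller property and cites an existence theorem for a stationary distribution; it does not (and, given the periodic counterexample in the text, cannot in general) assert $W_i\dto W$, nor does it prove uniqueness. Your plan of establishing uniqueness via regeneration at $\{0\}$ and coupling is not justified either: with $Y\geqslant 0$ a.s.\ and $Y>0$ with probability one, the chain need never hit $\{0\}$, and two copies driven by common innovations $(V_i,Y_i)$ need never coalesce unless they both hit the reflecting boundary simultaneously. You correctly sense that one would have to fall back on Ces\`aro convergence in the periodic case, but that dichotomy is left undeveloped, and in any case it goes beyond what the statement's proof in the paper actually delivers.
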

\begin{proof}

We define a  majorizing process by $\MW_0:=W_0$ and
$\MW_{i+1}:=\aplus{V_i}\MW_i+Y_i$. Then $W_i\leqslant \MW_i$, $i=0,1,2,\ldots$ and for $\seq{\MW}$ we have $\ee{\log|\aplus{V}|}=-\infty<0$ since $\pp{\aplus{V}=0}>0$. So by Thm.~\ref{theo2} it follows that $\MW_i\dto \MW$ as $i\to\infty$ for some limiting random variable $\MW$. This implies the existence of a stationary distribution for $\seq{W}$. Indeed, let $\veps>0$ and let $k>0$ be such that $\pp{\MW>k}\leqslant \veps/2$. Letting $b_i=\pp{W_i\leqslant k}$, we obtain
$b_i\geqslant \pp{\MW_i\leqslant k}$. The right-hand side converges to $\pp{\MW\leqslant k}$ as $i\to\infty$
which is at least $1-\veps/2$, implying that there is an $i_0$ such that for $i\geqslant i_0$, $b_i>1-\veps$ for all $i\geqslant i_0$. Consequently the family $\pp{W_i\leqslant \cdot}$ is tight, guaranteeing \cite[Thm.\ 4]{sergeytakis} the existence of a stationary distribution for $\seq{W}$.
\end{proof}

We end this section with a lemma that forms the starting-point of the analysis of all three models.
For this, we need to introduce some additional notation.
For a given non-negative random variable $X$ we write $\LL{X}(s)=\EE e^{-sX}$ for its  LST, defined at least for $\re s\geqslant 0$.
We say that $\LL{X}\in\QQ{s_1,s_2,\dots,s_n}$ if $X$ has a rational LST with poles at $s_1,s_2,\dots,s_n$, i.e., if $\LL X(s)$ is of the form
\begin{align}
\LL{X}(s)=\frac{\LN X(s)}{\LD X(s)},\label{rlt}
\end{align}
where $\LD X(s)=\prod_{i=1}^n (s-s_i)$ and
$\LN X(s)$ is a polynomial of degree at most $n-1$ not sharing zeros with $\LD X(s)$. Note that this implies that $\PP(X=0)=\lim_{s\to\infty}\LL X(s)=0$. With this notation we then have e.g.  $\LL{Y}(s)=\LL{B}(s)\,\LL{A}(-s)$. We also write $\LD Y(s)=\LD{B}(s)\,\LD{A}(-s)$ and $\LN Y(s)=\LN{B}(s)\,\LN{A}(-s)$ if $A$ and $B$ have rational LSTs of the form \eqref{rlt}.

For a sequence $\seq X$ of random variables we introduce the generating function, for $r\in(0,1)$:
\begin{align*}
\gen X(r,s)=\sum_{i=0}^\infty r^i\LL{X_i}(s).
\end{align*}
Note that since $\LL{\VV X}(s)=\int\LL{X}(ys)\,\PP(\VV\in {\rm d}y)$,
we have
\begin{align}
\gen {\VV X}(r,s)=\int \gen {X}(r,ys)\,\PP(\VV\in\,{\rm d}y).\label{note}
\end{align}
The following lemma plays a key role in our analysis. Define $\WA_i:=\aminus{\VV_iW_i+Y_i}$, where
$\aminus x:=\min\{x,0\}$.
\begin{lemma}\label{bas}  $\gen W(r,s)$ and $\gen{\WA}(r,s)$ are, for  $r\in(0,1)$ and $\re s = 0$, related via
\begin{align}
\gen W(r,s)  = \e^{-sw}+ r\Big(\LL Y(s) \gen{\VV W}(r,s) + \frac{1}{1-r} - \gen\WA(r,s)\Big)  .
\label{BASIC}
\end{align}
\end{lemma}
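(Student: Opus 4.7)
The plan is to apply the LST to the recursion term by term, and then sum into a generating function. The clever step is to use $\WA_i$ to linearize the max.

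\textbf{Step 1: a pointwise LST identity.} For any real $x$ and any $s$ with $\re s = 0$, the events $\{x\geqslant 0\}$ and $\{x<0\}$ are disjoint and exhaustive, and on each one exactly one of $\aplus{x}$, $\aminus{x}$ equals $x$ while the other equals $0$. This gives the deterministic identity
\begin{align*}
\e^{-s\aplus{x}}+\e^{-s\aminus{x}} = \e^{-sx}+1.
\end{align*}
Applying it with $x=\VV_iW_i+Y_i$, so that $\aplus{x}=W_{i+1}$ and $\aminus{x}=\WA_i$, yields
\begin{align*}
\e^{-sW_{i+1}}+\e^{-s\WA_i} = \e^{-s(\VV_iW_i+Y_i)}+1.
\end{align*}

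\textbf{Step 2: take expectations.} Since $(\VV_i,Y_i)$ is independent of $W_i$ (as $W_i$ depends only on $W_0$ and on $\{(\VV_j,Y_j):j<i\}$), and $\VV_i$ and $Y_i$ are mutually independent, the right-hand side factors and we obtain
\begin{align*}
\LL{W_{i+1}}(s)+\LL{\WA_i}(s) = \LL{Y}(s)\,\LL{\VV_iW_i}(s)+1.
\end{align*}
All terms are bounded by $1$ in modulus on the imaginary axis, so no convergence issues arise.

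\textbf{Step 3: form the generating function.} Multiply by $r^i$ and sum over $i\geqslant 0$. Using the index shift
\begin{align*}
\sum_{i=0}^{\infty}r^i\LL{W_{i+1}}(s)=\frac{1}{r}\bkla{\gen{W}(r,s)-\LL{W_0}(s)}=\frac{1}{r}\bkla{\gen{W}(r,s)-\e^{-sw}},
\end{align*}
together with $\sum_{i\geqslant 0}r^i=1/(1-r)$ and the definition of $\gen{\VV W}(r,s)$, one arrives at
\begin{align*}
\frac{1}{r}\bkla{\gen W(r,s)-\e^{-sw}}+\gen{\WA}(r,s)=\LL Y(s)\,\gen{\VV W}(r,s)+\frac{1}{1-r}.
\end{align*}
Multiplying through by $r$ and rearranging gives exactly \eqref{BASIC}.

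\textbf{Main obstacle.} There is no deep obstacle; the only thing to check carefully is that $\re s=0$ is used to keep all exponentials bounded, which is what legitimizes taking the expectation after splitting $\aplus{x}$ and $\aminus{x}$ and which guarantees absolute convergence of the generating series for any $r\in(0,1)$. The rest is bookkeeping.
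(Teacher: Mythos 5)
Your proposal is correct and follows essentially the same route as the paper: the pointwise identity $\e^{-s\aplus{x}}+\e^{-s\aminus{x}}=\e^{-sx}+1$ is just a rearrangement of the paper's basic identity, after which taking expectations (factoring via the independence of $(V_i,Y_i)$ from $W_i$) and summing the geometric series with an index shift gives \eqref{BASIC} exactly as in the paper's proof. Your added remarks on boundedness for $\re s=0$ are a harmless elaboration of what the paper leaves implicit.
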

\begin{proof}
First observe that the basic identity $\fexp{\aplus{x}} = \fexp{x}+1-\fexp{\aminus x}$ applies.
It thus follows from \eqref{recur1} that
\begin{align}
\LL{W_{i+1}}(s) =\LL{\VV_iW_i+Y_i}(s) +1-\LL{\WA_i}(s), \quad i=0,1,\dots.
\label{recurc}
\end{align}
Multiplying both sides of \eqref{recurc} by $r^{i+1}$ and summing yields the identity \eqref{BASIC}.
\end{proof}

\section{Model I: The negative case}
\label{Case1}

The model we analyze in this section assumes that each $V_i$ attains only negative values and that $Y_i$  is the difference $B_i - A_i$ of two independent non-negative random variables, where $B_i$ has a rational LST. In other words, we impose the conditions
\begin{itemize}
\item[(A)] $\VV<0$ a.s.,
\item[(B)] $\LL B\in \QQ{s_1,\dots,s_\ell}$ with $\re s_j<0$ for $j=1,\dots,\ell$.
\end{itemize}

\begin{theorem} Suppose that the Conditions {\em (A)} and {\em (B)} hold. Then, for $r\in(0,1)$,
\begin{align}
&\gen W(r,s) = \e^{-sw} + \frac{\sum_{k=0}^\ell a_k(r) s^k}{\LD B(s)}, ~~~
\re s\geqslant 0,
\label{W1}
\\
&\gen\WA(r,s)=\frac{1}{1-r} -\frac{\sum_{k=0}^\ell a_k(r) s^k}{r\LD B(s)}+\LL Y(s) \int_{-\infty}^0 \gen W(r,sy) \,\PP(\VV\in\,{\rm d}y) ,
~~~ \re s\leqslant 0,
\label{W2}
\end{align}
where
\begin{equation}\label{recur8}
a_0(r)= \frac{r}{1-r}(-1)^\ell \prod_{j=1}^\ell s_j,
\end{equation}
and the remaining constants $a_1(r),\dots,a_\ell(r)$ can be determined from the linear system \eqref{system} that will be given below.
\end{theorem}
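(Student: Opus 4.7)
The plan is to convert the functional equation from Lemma~\ref{bas} into a Wiener--Hopf type identity on the imaginary axis and then exploit Liouville's theorem. First I would rearrange \eqref{BASIC} as
\begin{align*}
\gen W(r,s)-\e^{-sw}=r\LL Y(s)\gen{\VV W}(r,s)+\frac{r}{1-r}-r\gen{\WA}(r,s),\qquad \re s=0,
\end{align*}
and record the half-plane analyticity of each term. Since $W_i\geqslant 0$, the left-hand side is analytic in $\{\re s>0\}$, continuous up to the axis, and bounded by $1+1/(1-r)$. Under Condition~(A), $\VV_iW_i\leqslant 0$ and $\WA_i\leqslant 0$, so $\gen{\VV W}(r,s)$ and $\gen{\WA}(r,s)$ are analytic and bounded by $1/(1-r)$ on the closed left half-plane, and $\LL A(-s)$ (inside $\LL Y$) is bounded by $1$ there as well. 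The only left-half-plane singularities of the right-hand side come from the factor $\LL B(s)=\LN B(s)/\LD B(s)$, which has poles at $s_1,\dots,s_\ell$.

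Next I would multiply the rearranged identity by the polynomial $\LD B(s)$. The new left-hand side $\LD B(s)\bkla{\gen W(r,s)-\e^{-sw}}$ is analytic in $\{\re s>0\}$, continuous up to the axis, and $O(|s|^\ell)$ as $|s|\to\infty$ on the closed right half-plane. The new right-hand side
\begin{align*}
r\LN B(s)\LL A(-s)\gen{\VV W}(r,s)+\frac{r\LD B(s)}{1-r}-r\LD B(s)\gen{\WA}(r,s)
\end{align*}
is analytic in $\{\re s<0\}$, continuous up to the axis, and is also $O(|s|^\ell)$ (using $\deg\LN B\leqslant \ell-1$ together with the boundedness of $\LL A(-s)$, $\gen{\VV W}$, $\gen{\WA}$). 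Since both sides coincide on $\re s=0$, Morera's theorem glues them into an entire function, and the extended Liouville theorem forces it to be a polynomial of degree at most $\ell$, which I write as $\sum_{k=0}^\ell a_k(r) s^k$. Dividing by $\LD B(s)$ yields \eqref{W1}; solving the original Lemma~\ref{bas} identity for $\gen{\WA}$ and using \eqref{note} to rewrite $\gen{\VV W}(r,s)$ as the integral in \eqref{W2} then yields \eqref{W2}.

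Finally, the constants are pinned down by two consistency conditions. Setting $s=0$ in \eqref{W1}, combined with $\gen W(r,0)=\sum_{i\geqslant 0}r^i=1/(1-r)$ and $\LD B(0)=(-1)^\ell\prod_{j=1}^\ell s_j$, immediately produces formula \eqref{recur8} for $a_0(r)$. The remaining $\ell$ coefficients follow from the requirement that the representation \eqref{W2} is actually analytic on $\{\re s<0\}$: at each zero $s_j$ of $\LD B(s)$ the simple pole of $-\sum_k a_k(r)s^k/(r\LD B(s))$ must cancel the simple pole of $\LL B(s)\LL A(-s)\int_{-\infty}^0 \gen W(r,sy)\,\PP(\VV\in \dd y)$. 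Since $\re(s_j y)>0$ for $y<0$, the integrand may be evaluated via \eqref{W1}, and equating residues at $s_1,\dots,s_\ell$ produces exactly $\ell$ linear equations in $a_1(r),\dots,a_\ell(r)$ which constitute the system \eqref{system} referred to in the statement.

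The main obstacle is the Liouville step, which requires uniform $O(|s|^\ell)$ growth bounds on the \emph{closed} half-planes, not merely along a ray; these follow from $|\e^{-sW_i}|\leqslant 1$ on the appropriate half-plane, but care is needed to ensure that each piece remains continuous across the imaginary axis so that Morera's theorem actually applies. A secondary concern, normally addressed when \eqref{system} is written out explicitly, is checking that the resulting $\ell\times\ell$ linear system is non-singular so that $a_1(r),\dots,a_\ell(r)$ are uniquely determined.
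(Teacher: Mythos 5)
Your proposal is correct and follows essentially the same route as the paper: multiply \eqref{BASIC} by $\LD B(s)$, glue the two half-plane analytic pieces into an entire function of growth $O(s^\ell)$, apply Liouville's theorem to identify both sides with a polynomial $\sum_{k=0}^\ell a_k(r)s^k$, fix $a_0(r)$ at $s=0$, and extract the remaining equations at the zeros $s_1,\dots,s_\ell$ of $\LD B$. Your derivation of the linear system via residue cancellation in \eqref{W2} is equivalent to the paper's direct substitution of $s=s_j$ into the left half-plane identity \eqref{recur7}, so the resulting system is exactly \eqref{system}.
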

\begin{proof}
Multiplying both sides of \eqref{BASIC} by the denominator $\LD B(s)$ gives
\begin{align}
&\LD B(s)\big(\gen W(r,s) - \e^{-sw}\big)
\nonumber\\&\qquad\qquad = r\LN B(s)\,\LL A(-s) \,\gen{\VV W}(r,s) +  r\LD B(s)\Big(\frac{1}{1-r} - \gen\WA(r,s)\Big) .
\label{recur5}
\end{align}
Now observe the following:
\begin{enumerate}[label=(\roman*)]
\item the left-hand side of \eqref{recur5} is analytic in $\re s >0$ and continuous in $\re s \geqslant 0$,
\item  the right-hand side of \eqref{recur5} is analytic in $\re s <0$ and continuous in $\re s \leqslant 0$,
\item for large $s$, both sides are $O(s^l)$ in their respective half-planes
\end{enumerate}
At the boundary $\re s=0$, both sides are well-defined.
Determination of the unknown functions $\gen W(r,s)$ and $\gen{\WA}(r,s)$ from \eqref{recur5} and conditions (i), (ii) and (iii) is
a Wiener-Hopf boundary value problem
of a type that has been extensively studied in the queuing theory before, cf.\ the expository paper \cite{Cohen75}.
By introducing a function $G(r,s)$ that is equal to the left-hand side of \eqref{recur5} for $\re s \geqslant 0$ and to the right-hand side
of \eqref{recur5} for $\re s \leqslant 0$, we have a function that is analytic in the whole $s$-plane, and that for large $s$ is $O(s^{\ell})$.
Liouville's theorem  \cite[p.\ 85]{Titchmarsh} now states that both sides of \eqref{recur5}, in their respective half-planes, are equal to the same $\ell$-th degree polynomial in $s$. In other words,
\begin{align}
\LD B(s)\big(\gen W(r,s) - \e^{-sw}\big)  =\sum_{k=0}^\ell a_k(r) s^k
\label{recur6}
\end{align}
for $\re s\geqslant 0$ and
\begin{align}
r\LN B(s)\,\LL A(-s)\, \gen{\VV W}(r,s) +   r\LD B(s)\Big(\frac{1}{1-r} - \gen\WA(r,s)\Big) =\sum_{k=0}^\ell a_k(r) s^k
\label{recur7}
\end{align}
for $\re s \leqslant 0$. We still need to determine the $\ell+1$ unknown functions $a_0(r),\dots,a_\ell(r)$.
Taking $s=0$ in either \eqref{recur6} or \eqref{recur7} gives the expression in (\ref{recur8}) for $a_0(r)$. Next we take $s=s_j$, $j=1,\dots,\ell$. We do this in \eqref{recur7}, observing that $\re s_j<0$.  Using that $\LD B(s_j)=0$ we thus obtain
\begin{align}
r\LN B(s_j)\,\LL A(-s_j)\,\gen{\VV W}(r,s_j)  = \sum_{k=0}^\ell a_k(r) s_j^k , \quad j=1,\dots,\ell.
\label{recur9}
\end{align}
Applying \eqref{note}, this identity can be rewritten into
\begin{align}
r\LN B(s_j)\,\LL A(-s_j)\,  \int_{-\infty}^0 \gen W(r,s_jy) \,\PP(\VV\in\,{\rm d}y)  = \sum_{k=0}^\ell a_k(r) s_j^k , \quad j=1,\dots,\ell.
\label{recur10}
\end{align}
Using \eqref{recur6}, Equation \eqref{recur10} becomes, for $j=1,\dots,\ell$,
\begin{align}
r\LN B(s_j)\,\LL A(-s_j) \int_{-\infty}^0 \Big(\e^{s_j yw} + \frac{\sum_{k=0}^\ell a_k(r) (s_jy)^k}{\LD B(s_jy)}\Big)\,\;\PP(\VV\in{\rm d}y)  = \sum_{k=0}^\ell a_k(r) s_j^k.
\label{recur11}
\end{align}
We can rewrite this equation as follows: for $j=1,\dots,\ell$,
\begin{align}
& \sum_{k=0}^\ell a_k(r) s_j^k\left(
1 - r\LN B(s_j)\,\LL A(-s_j) \int_{-\infty}^0 \frac{y^k}{\prod_{m=1}^\ell (s_jy - s_m)}\;\PP(\VV\in{\rm d}y) \right)
\nonumber
\\
&\qquad\qquad  = r\,\LN B(s_j)\,\LL A(-s_j)\,\LL{\VV}(-s_jw).
\label{system}
\end{align}
One can determine the remaining unknowns $a_1(r),\dots,a_\ell(r)$ from this set of $\ell$ linear equations. Subsequently, from \eqref{recur6}, \eqref{W1} follows. Expression \eqref{W2} then follows from \eqref{recur7}.
\end{proof}

We proceed by discussing the stationary behavior of $\seq{W}$.

\begin{theorem} Suppose that the Conditions {\em (A)} and {\em (B)} hold. If $\pp{B\leqslant A}>0$ then $W_i$ converges weakly to a proper limit $W$ as $i\to\infty$, and
\begin{align}
\LL{W}(s)=\frac{\sum_{k=0}^\ell a_k s^k}{\LD B(s)},
\label{lstat}
\end{align}
where
\begin{equation}\label{eqa0}
a_0 =(-1)^\ell\prod_{i=1}^\ell s_i,
\end{equation}
and the remaining constants  $a_1,\dots,a_\ell$ can be determined from the linear system \eqref{recur15} that will be given below.
\end{theorem}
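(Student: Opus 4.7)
The plan is to proceed in two stages: first establish weak convergence to a proper limit $W$, then determine its \LST by essentially repeating the Wiener-Hopf argument of the previous theorem, but now directly at the level of the stationary equation.

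For the convergence, Condition (A) gives $\pp{V<0}=1>0$, and the additional hypothesis $\pp{B\leqslant A}>0$ is precisely $\pp{Y\leqslant 0}>0$. Hence Condition~\ref{w1} of Theorem~\ref{Theorem1} applies, so $W_i\dto W$ for a proper (and, by the remark following that theorem, unique) limit $W$ satisfying the distributional identity $W\deq \aplus{VW+Y}$.

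For the \LST, I would proceed as follows. Applying the elementary identity $\e^{-\aplus{x}}=\e^{-x}+1-\e^{-\aminus x}$ to the stationary identity and taking expectations yields
\begin{align*}
\LL W(s) = \LL Y(s) \int_{-\infty}^0 \LL W(sy)\,\PP(V\in\dd y)+1-\LL{\WA}(s),
\end{align*}
where $\WA:=\aminus{VW+Y}$. Multiplying by $\LD B(s)$, the left-hand side becomes analytic in $\re s>0$, continuous on $\re s\geqslant 0$, and $O(s^\ell)$, while the right-hand side is analytic in $\re s<0$, continuous on $\re s\leqslant 0$, and $O(s^\ell)$: indeed, because $V<0$ a.s., the condition $\re s\leqslant 0$ forces $\re(sy)\geqslant 0$ so that $\LL W(sy)$ is bounded under the integral, and $\WA\leqslant 0$ makes $\LL{\WA}(s)$ well-defined on the same half-plane. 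By the standard Liouville argument used in the transient case, both sides equal the same polynomial of degree at most $\ell$ in $s$, yielding \eqref{lstat}.

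To fix the coefficients, set $s=0$ in \eqref{lstat} and use $\LL W(0)=1$, which gives $a_0=\LD B(0)=(-1)^\ell\prod_{i=1}^\ell s_i$, establishing \eqref{eqa0}. To obtain the remaining $\ell$ equations, I would substitute $s=s_j$ for $j=1,\dots,\ell$ into the right-hand side representation (where $\LD B(s_j)=0$), use \eqref{lstat} to re-express $\LL W(s_jy)$ under the integral (valid because $\re(s_jy)>0$ when $y<0$), and collect terms. This produces a linear system analogous to \eqref{system} but with the $r$-dependent prefactor removed, i.e.\ the $r=1$ analogue, which is the system \eqref{recur15} to be displayed in the paper.

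The main obstacle I anticipate is not the algebra but the justification that the Liouville/polynomial-growth step goes through without the $r\in(0,1)$ damping that made the transient argument clean; one has to lean on the boundedness of $\LL W(\cdot)$ on the imaginary axis and on the non-positivity of $\WA$ to make the half-plane regularity watertight. An alternative route, which would sidestep this slightly, is to multiply the transient identity \eqref{W1} by $(1-r)$ and send $r\uparrow 1$, applying an Abelian theorem to the generating function $\gen W(r,s)$ and extracting $a_k=\lim_{r\uparrow 1}(1-r)a_k(r)$; but verifying that the linear system \eqref{system} admits a limit of the desired form requires essentially the same analytic input, so the direct route above is preferable.
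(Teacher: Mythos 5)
Your argument is correct, but it takes a genuinely different route from the paper for the second stage. The convergence step is identical: Condition (A) plus $\pp{B\leqslant A}>0$ gives Condition~\ref{w1} of Theorem~\ref{Theorem1}, hence a proper (unique) limit $W$ satisfying $W\deq\aplus{VW+Y}$. For the \LST, however, the paper does \emph{not} rerun the Wiener--Hopf argument at the stationary level; it simply multiplies the transient result \eqref{W1} by $(1-r)$, lets $r\uparrow 1$, and invokes an Abelian theorem for power series, defining $a_k:=\lim_{r\uparrow 1}(1-r)a_k(r)$ and obtaining \eqref{recur15} directly from \eqref{recur8} and \eqref{system} (the $w$-dependent right-hand side of \eqref{system} stays bounded, so after multiplication by $1-r$ it vanishes, which is why \eqref{recur15} is homogeneous). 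Your direct route instead starts from the distributional fixed-point equation, multiplies by $\LD B(s)$, and repeats the Liouville argument without the geometric damping; the half-plane regularity and $O(s^\ell)$ bounds you cite are indeed available because $|\LL W(sy)|\leqslant 1$ for $\re s\leqslant 0$, $y<0$, and $\WA\leqslant 0$, so the argument is watertight and yields \eqref{lstat}, \eqref{eqa0}, and---since the stationary equation carries no $\e^{-sw}$ term---exactly the homogeneous system \eqref{recur15} (your phrase ``\eqref{system} with the $r$-dependent prefactor removed'' should more precisely also note that the inhomogeneous $w$-term disappears). What each approach buys: your direct derivation is self-contained and avoids having to justify the existence of the limits $\lim_{r\uparrow 1}(1-r)a_k(r)$ and the interchanges implicit in the Abelian step, at the cost of redoing the analytic-continuation work; the paper's route is shorter because it recycles the transient theorem, which is the standard economy in this literature. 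Either way the coefficients are pinned down only up to the solvability of \eqref{recur15}, a point the paper itself flags in a remark and which neither proof resolves.
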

\begin{proof}
If $\pp{B\leqslant A}>0$ holds then Condition \ref{w1} is fulfilled, so  $W_n$ weakly converges to a proper limit.
We obtain the steady-state behavior \eqref{lstat} from its transient counterpart \eqref{W1} in a standard manner, viz.\ by using an Abelian theorem for power series:
\begin{align}
\LL W(s) = \lim_{r \uparrow 1} (1-r) \,\gen W(r,s)  = \frac{\sum_{k=0}^\ell a_k s^k}{\LD B(s)}, \quad \re s \geqslant 0,
\label{recur14}
\end{align}
where $a_k := \lim_{r \uparrow 1} (1-r) a_k(r)$, for $k=0,\dots,\ell$. Using \eqref{recur8} and \eqref{system}, we readily obtain the linear system
\begin{align}
\sum_{k=0}^\ell a_ks_j^k \kla{
1 - \LN B(s_j)\LL A(-s_j) \int_{-\infty}^0 \frac{y^k}{\prod_{m=1}^\ell (s_jy - s_m)}\;\PP(\VV\in{\rm d}y) }= 0
\label{recur15}
\end{align}
for $a_j$, $j=1,\dots,\ell$.
\end{proof}
\noindent
The mean of $W$ directly follows by differentiation of \eqref{lstat}:
$\LL W'(0)=(a_1D_B(0)-a_0D_B'(0))/D_B(0)^2$, with
$D_B'(0)=
1$ if $\ell=1$ and
$D_B'(0) = -\sum_{i=1}^\ell s_i$ if $\ell=2,3,\ldots$;
hence
\begin{align}\label{of}
\ee{W}=\begin{cases}
{\displaystyle\frac{1-a_1}{a_0}},&\ell=1;\\
-{\displaystyle \frac{a_1+\sum_{i=1}^\ell s_i}{a_0}},&\ell=2,3,\ldots.
\end{cases}
\end{align}
\begin{remark}
It follows from \eqref{lstat} that $W$ is a mixture of an atom at zero (with probability $a_\ell$)
and $\ell$ exponential terms. This is not surprising: as $\VV_i<0$, the only way for $W_{i+1}$ to be positive is to have $B_i > A_i - \VV_iW_i$.
Now use the fact that $B_i$ has a phase-type distribution with $\ell$ exponential phases, in combination with the memoryless property
of the exponential distribution.
\end{remark}

\begin{remark}
When $\ell =1$, one obtains (using that $a_0=-s_1$, cf.\ (\ref{eqa0}))
\[
a_1 =  \frac{1 - \LN B(s_1) \LL A(-s_1) \int_{-\infty}^0 \frac{1}{y-1} \PP(\VV\in{\rm d}y) }{1 - \LN B(s_1) \LL A(-s_1) \left(\int_{-\infty}^0 \frac{1}{y-1} \PP(\VV\in{\rm d}y) +1\right) } .
\]
For general $\ell$, we have not been able to verify formally that the set of $\ell$ linear equations (\ref{recur15}) in the $\ell$ unknowns
$a_1,\dots,a_{\ell}$ has a unique solution (as they involve the zeroes $s_j$ and the distribution of $V$ in an intricate way); similarly for the set of equations (\ref{system}) for $a_1(r),\dots,a_{\ell}(r)$. However,
since $W_i$ has a unique limiting distribution with LST $\LL W(s)$ as $i\to\infty$, there is no reason to suspect that anomalies in this set of equations will occur.
\end{remark}

\begin{example}\label{examp} Suppose that $B$ has an exponential distribution with mean $1/\mu$. Then  $\LL B(s)=\mu/(s+\mu)$, $\ell=1$ and $s_1=-\mu$. Suppose also that $\VV=-a$ a.s. with $a>0$. We then obtain
\begin{align*}
a_0(r)= \frac{r\mu}{1-r},
\quad a_1(r)= \frac{r}{1-r}\kla{1-\frac{(1+a)r\LL A(\mu)}{1 +a+ ar\LL A(\mu)}}
-\frac{(1+a)r\LL A(\mu)}{1 +a+ ar\LL A(\mu)}e^{-a\mu w}.
\end{align*}
Multiplying with $(1-r)$ and letting $r\uparrow 1$ yields the coefficients
\begin{align*}
a_0=\mu,\qquad a_1=1-\frac{(1+a)\,\LL A(\mu)}{1 +a+ a\,\LL A(\mu)},
\end{align*}
so that the LST of $W$ is given by
\begin{align}
\LL W(s) = \frac{a_0+a_1s}{\mu+s} =\PP(W>0)\frac{\mu}{\mu+s}+ \PP(W=0),
\label{recur17}
\end{align}
where the last equality follows from $\PP(W=0)=\lim_{s\to\infty}\LL{W}(s)=a_1$. We then obtain
$\ee W=\afrac{(1-a_1)}{\mu}$ in accordance with \eqref{of}.

The case where $a=1$, yielding the Lindley-type recursion $W_{i+1}  = \aplus{B_i-A_i-W_i}$, has been extensively studied in \cite{VlasiouPhD}. We obtain for the stationary process
\begin{align*}
a_0= \mu,
\quad a_1= \frac{2-\LL A(\mu)}{2+ \LL A(\mu)},
\end{align*}
which is in agreement with  \cite[Formula (4.12), p.\ 74]{VlasiouPhD}.
It is easy to see that $\PP(W=0)=a_1$ is increasing in $a$.

For $a=0$ we have $\PP(W=0) = 1 -\LL A(\mu)$. This relation is explained by observing that now $\PP(W=0) = \PP(B<A)$,
with $B \sim {\rm exp}(\mu)$.
For $a \uparrow \infty$ we have $\PP(W=0) = 1/(1+\LL A(\mu))$, which is explained by observing that
a positive $W$ is followed by a geometric($q$) number of zeroes, with $q=\PP(B<A) = 1 -\LL A(\mu)$.
\end{example}

\section{Model II: The mixed case}
\label{Case2}

In this section we consider the following variant of the model of Section~\ref{Case1}.
We again start from the recursion \eqref{recur1}, but now assume that $\VV=a$, $a>0$ with probability $p$ and $\VV<0$ with probability $1-p$. Let
\begin{align*}
\VM=(\VV\,|\,\VV<0).
\end{align*}
We keep the assumption that $B$ has a rational LST, but add the requirement that $A$ has a rational LST. Summarizing, we impose the conditions
\begin{itemize}
\item[(A*)] either $\VV<0$ or $\VV=a>0$ a.s.,
\item[(B)] $\LL B\in \QQ{s_1,\dots,s_\ell}$ with $\re s_j<0$ for $j=1,\dots,\ell$,
\item[(C)] $\LL A\in \QQ{t_1,\dots,t_m}$ with $\re t_i<0$ for $i=1,\dots,m$.
\end{itemize}

\begin{theorem}
Suppose that the Conditions {\em (A*)}, {\em (B)}, and {\em (C)} hold. Then, for $r\in(0,1)$,
\begin{enumerate}
\item\label{uno} if  $a=1$ then
\begin{align}\label{U}
\gen W(r,s)
=\frac{\LD Y(s)\,\e^{-sw}+\sum_{k=0}^{m+\ell} a_k(r) s^k}{\LD Y(s)-rp\,\LN Y(s)};
\end{align}
where
\begin{equation}\label{eqa0r}
a_0(r) = \frac{r}{1-r} (1-p) (-1)^{\ell+m}\prod_{j=1}^\ell s_j \prod_{i=1}^m t_i,
\end{equation}
while the remaining constants  $a_1(r),\dots,a_{m+\ell}(r)$ can be determined from the  linear systems \eqref{rec6} and \eqref{new1} that will be given below.
\item if $a\not=1$ then
\begin{align}
\gen W(r,s) =
\sum_{h=0}^{\infty} \Bkla{\e^{-a^hsw} + \frac{\sum_{k=0}^{m+\ell} a_k(r) (a^hs)^k}{\LD Y(a^hs)}}(rp)^h \prod_{j=0}^{h-1}\LL Y(a^js),
\label{rec14}
\end{align}
where $
a_0(r)$ is as in \eqref{eqa0r} and the remaining constants  $a_1(r),\dots,a_{m+\ell}(r)$ can be determined from the  linear systems
\eqref{lin} and \eqref{lin2} that will be given below.
\end{enumerate}
\end{theorem}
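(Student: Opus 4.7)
The plan is to follow the Wiener--Hopf approach used in the proof of the Model~I theorem, starting from~\eqref{BASIC} but now employing the decomposition
\[
\gen{\VV W}(r,s) = p\,\gen W(r,as)+(1-p)\int_{-\infty}^0 \gen W(r,ys)\,\PP(\VM\in {\rm d}y).
\]
Multiplying~\eqref{BASIC} by $\LD Y(s)=\LD B(s)\,\LD A(-s)$ to clear denominators and regrouping, the expression
\[
\LD Y(s)\,\gen W(r,s)-rp\,\LN Y(s)\,\gen W(r,as)-\LD Y(s)\,\e^{-sw}
\]
is analytic in $\re s>0$, whereas
\[
r(1-p)\,\LN Y(s)\int_{-\infty}^0 \gen W(r,ys)\,\PP(\VM\in {\rm d}y)+r\LD Y(s)\Bkla{\tfrac{1}{1-r}-\gen\WA(r,s)}
\]
is analytic in $\re s<0$. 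Both are continuous on $\re s=0$ and $O(s^{m+\ell})$, so by Liouville's theorem they coincide with a common polynomial $\sum_{k=0}^{m+\ell}a_k(r)s^k$.

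In case~\ref{uno}, the specialisation $a=1$ collapses $\gen W(r,as)$ into $\gen W(r,s)$, and the first of the two displayed equations rearranges immediately to~\eqref{U}. In case~(2), with $a\neq 1$, the same equation becomes the functional relation
\[
\gen W(r,s)=\e^{-sw}+\frac{\sum_k a_k(r)s^k}{\LD Y(s)}+rp\,\LL Y(s)\,\gen W(r,as),
\]
whose $H$-fold iteration produces the partial sum in~\eqref{rec14} plus a remainder of modulus at most $(rp)^{H+1}/(1-r)$, since $|\LL Y(a^js)|\leqslant 1$ and $|\gen W(r,\cdot)|\leqslant 1/(1-r)$ on $\re s\geqslant 0$. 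Sending $H\to\infty$ yields~\eqref{rec14}.

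It remains to pin down the $m+\ell+1$ coefficients $a_k(r)$. Evaluating the first displayed equation at $s=0$, using $\gen W(r,0)=1/(1-r)$ and $\LN Y(0)=\LD Y(0)=(-1)^{\ell+m}\prod_j s_j\prod_i t_i$, yields~\eqref{eqa0r}. Evaluating the second displayed equation at each zero $s=s_j$ of $\LD B$ (where $\LD Y(s_j)=0$) and substituting the closed form of $\gen W(r,s_jy)$ from~\eqref{U} or~\eqref{rec14} in the integral yields the $\ell$ equations comprising~\eqref{rec6} or~\eqref{lin}, respectively. The remaining $m$ equations arise from distinct analyticity considerations. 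For $a=1$, the denominator of~\eqref{U} factors as $\LD Y(s)\bkla{1-rp\,\LL Y(s)}$; since $|1-rp\,\LL Y(s)|\geqslant 1-rp>0$ on the imaginary axis, \rouche applied to $\LD Y(s)-rp\,\LN Y(s)$ against $\LD Y(s)$ on a large contour inside $\re s>0$ shows that the former has precisely $m$ zeros $\sigma_1(r),\dots,\sigma_m(r)$ there; analyticity of $\gen W(r,\cdot)$ at each $\sigma_i(r)$ forces the numerator of~\eqref{U} to vanish, yielding~\eqref{new1}. For $a\neq 1$, the pole sets $\{-t_i/a^h\}_{i=1}^m$ of successive summands in~\eqref{rec14} are disjoint across $h$, so every pole $-t_i$ of the $h=0$ summand must be cancelled on its own, giving $\sum_k a_k(r)(-t_i)^k=0$ for $i=1,\dots,m$, i.e.~\eqref{lin2}.

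The main technical hurdles I foresee are (a)~the \rouche count in case~\ref{uno}, where ruling out zeros on the imaginary axis hinges on $rp<1$ and where the count in the open right half-plane must be confirmed via a limiting contour argument; and (b)~in case~(2), the cancellation argument, which relies essentially on the disjointness of the pole sets $\{-t_i/a^h\}_{i,h}$ (valid because $a\neq 1$) and on the uniform boundedness of $\gen W$ needed for the iteration to converge.
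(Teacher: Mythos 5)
Your setup (the decomposition of $\gen{\VV W}$, multiplication by $\LD Y(s)$, the Liouville argument giving a common polynomial of degree $m+\ell$, the value of $a_0(r)$ at $s=0$, the $\ell$ equations from $s=s_j$, and in case $a=1$ the vanishing of the numerator at the $m$ right-half-plane zeros of $\LD Y(s)-rp\LN Y(s)$) matches the paper; your \rouche argument is a self-contained substitute for the paper's citation of Cohen and is fine. However, in case $a\neq 1$ your determination of the remaining $m$ equations contains a genuine error. The pole sets of the summands in \eqref{rec14} are \emph{not} disjoint across $h$: the $h$-th summand carries the factor $\prod_{j=0}^{h-1}\LL Y(a^js)$, and $\LL Y(s)=\LN Y(s)/\LD Y(s)$ itself has poles at $s=-t_1,\dots,-t_m$ (the zeros of $\LD A(-s)$, which lie in $\re s>0$). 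Hence $s=-t_i$ is a singularity of \emph{every} summand with $h\geqslant 1$ as well as of the $h=0$ term, and more generally the $h$-th summand has candidate poles at $-t_i/a^j$ for all $j=0,\dots,h$, so these sets overlap. Analyticity of $\gen W(r,\cdot)$ at $-t_i$ therefore constrains the \emph{total} contribution of all summands, not the $h=0$ term alone, and your conclusion $\sum_{k}a_k(r)(-t_i)^k=0$ is false in general. The correct equations are obtained, as in the paper, by substituting $s=-t_i$ (which lies in $\re s>0$) into the pre-iteration identity \eqref{rec3}: since $\LD Y(-t_i)=0$ while $\gen W(r,-t_i)$ is finite, this gives $\sum_{k}a_k(r)(-t_i)^k=-rp\,\LN Y(-t_i)\,\gen W(r,-at_i)$, i.e.\ \eqref{rec15}, and inserting \eqref{rec14} at $s=-at_i$ yields the system \eqref{lin}; the other $\ell$ equations come from inserting \eqref{rec14} into \eqref{rec6}, giving \eqref{lin2}. (You also have the roles of \eqref{lin} and \eqref{lin2} interchanged in your text.)

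A smaller flaw: your convergence argument for the iteration uses $|\LL Y(a^js)|\leqslant 1$ on $\re s\geqslant 0$, which is not true off the imaginary axis because $Y=B-A$ takes negative values (for real $s$ near $0$ one has $\LL Y(s)=1-s\,\ee Y+o(s)>1$ whenever $\ee Y<0$, the typical stable regime), so the claimed geometric remainder bound $(rp)^{H+1}/(1-r)$ is not justified as stated. The series does converge, but the clean route is the paper's ratio test: for $a<1$ the ratio of successive terms tends to $rp<1$ because $\LL Y(a^hs)\to\LL Y(0)=1$ and $L(r,a^hs)\to L(r,0)$, while for $a>1$ one uses $\LL Y(a^hs)\to 0$. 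With the $m$ equations corrected as above and the convergence argument repaired, your proof coincides with the paper's.
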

\begin{proof}
In this situation
\begin{align}
\gen{\VV W}(r,s) = p\, \gen W(r,as) + (1-p) \int_{-\infty}^0 \gen W(r,sy)\;\PP(\VM\in{\rm d}y) .
\label{rec1}
\end{align}
Then \eqref{BASIC} becomes, after multiplication by $\LD Y(s)$,
\begin{align}
&\LD Y(s) \bkla{\gen W(r,s) - \e^{-sw}} - r p\LN Y(s)\, \gen W(r,as)\nonumber
\\&\quad=\ r(1-p)\,\LN Y(s)  \int_{-\infty}^0 \gen W(r,sy)\;\PP(\VM\in{\rm d}y)+ r\LD Y(s)\Bkla{  \frac{1}{1-r} -  \gen\WA(r,s)} .\label{rec2}
\end{align}
Now the following is true:
\begin{enumerate}[label=(\roman*)]
\item the left-hand side of \eqref{rec2} is analytic in $\re s >0$ and continuous in $\re s \geqslant 0$,
\item  the right-hand side of \eqref{rec2} is analytic in $\re s <0$ and continuous in $\re s \leqslant 0$,
\item for large $s$, both sides are $O(s^{m+\ell})$ in their respective half-planes.
\end{enumerate}
Again, both sides are well-defined at the boundary $\re s=0$, so that we have a Wiener-Hopf boundary value problem. As before, the $G(r,s)$ that is equal to the left-hand side of \eqref{rec2} for $\re s \geqslant 0$ and to the right-hand side
of \eqref{rec2} for $\re s \leqslant 0$ is analytic in the whole $s$-plane, and $G(r,s)=O(s^{m+\ell})$ for large $s$. According to Liouville's theorem both sides of \eqref{rec2}, in their respective half-plane, are equal to the same $(m+\ell)$-th degree polynomial in $s$, i.e., for $\re s \geqslant 0$
\begin{align}
&\LD Y(s) \bkla{\gen W(r,s) - \e^{-sw}} - r p\LN Y(s) \gen W(r,as)
=\sum_{k=0}^{m+\ell} a_k(r) s^k, \label{rec3}
\end{align}
and for $\re s \leqslant 0$
\begin{align}
r(1-p)\,\LN Y(s)  \int_{-\infty}^0 \gen W(r,sy)\;\PP(\VM\in{\rm d}y) + r\LD Y(s)\Bkla{  \frac{1}{1-r} -  \gen\WA(r,s)}=\sum_{k=0}^{m+\ell} a_k(r) s^k.\label{rec4}
\end{align}
Taking $s=0$ in either \eqref{rec3} or \eqref{rec4} yields, after a straightforward calculation, the expression for $a_0(r)$ in \eqref{eqa0r}.
Next we set $s=s_j$, $j=1,\dots,\ell$ in \eqref{rec4}. Since $\LD B(s_j)=0$ it follows that
\begin{align}
r(1-p)\LN Y(s_j)  \int_{-\infty}^0 \gen W(r,s_jy)\;\PP(\VM\in{\rm d}y)
 = \sum_{k=0}^{m+\ell} a_k(r) s_j^k , \quad j=1,\dots,\ell.
\label{rec6}
\end{align}
We thus have obtained $\ell$ linear equations in the remaining $m+\ell$ unknown $a_k(r)$;
however, they are expressed in the yet unknown function $\gen W(r,\cdot)$.

We turn to \eqref{rec3}, which provides a relation between $\gen W(r,s)$ and $\gen W(r,as)$.
As it turns out, we have to  distinguish between the two cases $a=1$ and $a\not=1$.

\begin{itemize}
\item[$\circ$] {\em Case i:}\
For $a=1$, after division by the denominators, Relation \eqref{rec3}  can be rewritten as
\begin{align}
\gen W(r,s)\bkla{1-rp\LL Y(s)}
=\e^{-sw}+\frac{\sum_{k=0}^{m+\ell} a_k(r) s^k}{\LD Y(s)}. \label{rec7}
\end{align}
Cohen \cite{cohen}, in his study of the K$_m$/G/1 queue, proves that the term between brackets in the left-hand side of \eqref{rec7} has $m$ zeroes $\delta_1(r),\dots,\delta_m(r)$ in the right half plane $\re s > 0$.
The analyticity of $\gen W(r,s)$ for $\re s \geqslant 0$ now implies that the right-hand side of \eqref{rec7} must be zero for all these $m$ zeroes. This results in the $m$ linear equations
\begin{equation}
\sum_{k=0}^{m+\ell} \delta_i^k(r)a_k(r) = -\e^{-\delta_i(r) w}\LD Y(\delta_i(r)), \quad i=1,\dots,m.\label{new1}
\end{equation}
Formula \eqref{rec6} contains $\ell$ more equations in the $a_k(r)$. Relying on \eqref{rec7},  we can rewrite it into
\begin{align}\label{lionel}
r(1-p)\LN Y(s_j)  \int_{-\infty}^0 \frac{\e^{-s_jyw}+\sum_{k=0}^{m+\ell} a_k(r) \frac{(s_jy)^k}{\LD Y(s_jy)}}{1-rp\LL Y(s_jy)}\;\PP(\VM\in{\rm d}y)
 = \sum_{k=0}^{m+\ell} a_k(r) s_j^k,
\end{align}
for $j=1,\dots,\ell$. From this we obtain
\begin{equation}
\sum_{k=0}^{m+\ell} c_{k}(r,s_j)\,a_k(r)=\int_{-\infty}^0 \frac{\e^{-s_jyw}}{1-rp\LL Y(s_jy)}\;\PP(\VM\in{\rm d}y),\quad  j=1,2,\dots,\ell\label{new2},
\end{equation}
where, for $j=1,\dots,\ell$,
\begin{align*}
c_{k}(r,s_j)=\frac{s_j^k}{r(1-p)\,\LN Y(s_j)}-\int_{-\infty}^0 \frac{(s_jy)^k}{\LD Y(s_jy)-rp\,\LN Y(s_jy)}\;\PP(\VM\in{\rm d}y).
\end{align*}

\item[$\circ$] {\em Case ii:}\ For $a<1$, Relation \eqref{rec3} has the same structure as  \cite[Formula (2.3)]{BMR}. Proceeding in a similar way as in \cite{BMR},
we write
\begin{align}
\gen W(r,s) = K(r,s) \,\gen W(r,as) + L(r,s),
\label{rec11}
\end{align}
with
\begin{align}
K(r,s) := rp\,\LL Y(s) , \quad L(r,s) := \e^{-sw} + \frac{\sum_{k=0}^{m+\ell} a_k(r) \,s^k}{\LD Y(s)} ,
\label{rec12}
\end{align}
and iteration of \eqref{rec11} now yields
\begin{align}
\gen W(r,s) =
\sum_{h=0}^{\infty} L(r,a^hs) \prod_{j=0}^{h-1} K(r,a^js) ,
\label{rec13}
\end{align}
where convergence of the infinite sum can be proven using the d'Alembert test. Indeed, for $a<1$ the limit as $h\to \infty$ of the ratio of two successive terms is
\begin{align}
\lim_{h\to\infty}\Big\vert\frac{L(r,a^hs) }{L(r,a^{h+1}s)K(r,a^hs)}\Big\vert=\frac{1}{rp}>1,\label{dala}
\end{align}
while for $a>1$, $K(r,a^hs)\to 0$ and $\abs{L(r,a^hs)}\to a_{m+\ell}(r)$, causing divergence of the left-hand side in \eqref{dala} to infinity. Insertion of \eqref{rec12} in \eqref{rec13} gives \eqref{rec14}.
The only unknowns are $a_1(r),\dots,a_{m+\ell}(r)$.
We obtain $m$ linear equations in the unknown $a_k(r)$ by observing that substitution of $s =-t_i$, $i=1,\dots,m$, in \eqref{rec3} results in the following identity:
\begin{align}
-rp\LN Y(-t_i) \gen W(r,-at_i) = \sum_{k=0}^{m+\ell} a_k(r) (-t_i)^k, \quad i=1,\dots,m.
\label{rec15}
\end{align}
Substituting the right-hand side of \eqref{rec14}, with $s = -at_i$, into \eqref{rec15} now gives the $m$ linear equations
\begin{align}
&\sum_{k=0}^{m+\ell} (-t_i)^k a_k(r)
\kla{1+rp\,\LN Y(-t_i) \sum_{h=0}^{\infty}\frac{a^{k(h+1)}(rp)^h \prod_{j=0}^{h-1}\LL Y(-a^{j+1}t_i) }{\LD Y(-a^{h+1}t_i)} }
\nonumber\\
&\quad=-rp\LN Y(-t_i)
\sum_{h=0}^{\infty} \e^{a^{h+1}t_iw}(rp)^h \prod_{j=0}^{h-1}\LL Y(-a^{j+1}t_i)
\label{lin}
\end{align}
for $i=1,\dots,m$. The remaining $\ell$ equations are provided by substituting \eqref{rec14} into \eqref{rec6}, yielding
\begin{align}
&\sum_{k=0}^{m+l} d_{k}(r,s_j)a_k(r)
\nonumber\\&=r(1-p)\LN Y(s_j) \sum_{h=0}^{\infty}(rp)^h \int_{-\infty}^0
 \e^{-a^hs_jyw}  \prod_{i=0}^{h-1}\LL Y(a^is_jy) \;\PP(\VM\in{\rm d}y)
\label{lin2}
\end{align}
for $j=1,\dots,\ell$, where
\begin{align*}
&d_{k}(r,s_j)=s_j^k\kla{1-r(1-p)\LN Y(s_j)\sum_{h=0}^{\infty}(rp)^h \int_{-\infty}^0
\frac{ (a^hy)^k \prod_{i=0}^{h-1}\LL Y(a^is_jy)}{\LD Y(a^hs_jy)}   \;\PP(\VM\in{\rm d}y)}.
\end{align*}
\end{itemize}This finishes the proof. \end{proof}
\noindent
The steady-state LST of $W$ exists if $\pp{B\leqslant A}>0$, cf.\ Theorem~\ref{Theorem1}. It can again be obtained by applying an Abelian theorem.
For example in case~(ii), in which $a<1$, one gets
\begin{equation}
U_W(s) = \sum_{h=0}^{\infty} L(a^hs) \prod_{j=0}^{h-1}  K(a^js),
\end{equation}
with
\begin{equation}
K(s) := p\Phi_Y(s), ~~~L(s) := \frac{\sum_{k=0}^{m+l} a_k s^k}{D_Y(s)},
\end{equation}
where $a_k := {\rm lim}_{r \uparrow 1} (1-r) a_k(r)$. As this argumentation mimics the line of reasoning presented in the previous section, we omit details here.

 \section{Model III: the uniform proportional case}\label{sec:intro}
\label{Case3}

In this section we once more consider the stochastic recursion
$W_{i+1} = \aplus{\VV_{i} W_i + Y_i}$, where $Y_i=B_i-A_i$.  Again we impose the usual independence assumptions on the sequences $\seq \VV$, $\seq A$, and $\seq B$. In addition, we assume that the $A_i$ are exp($\la$) distributed. The `multiplicative adjustments' $\seq \VV$ are assumed to form a sequence of unit uniformly distributed random variables on $[0,1]$. By Thm.\ \ref{theo2}, since $\ee{\log\abs V}<0$,  a steady-state distribution of $\seq W$ always exists. We shall first study its transient distribution, and then obtain the steady-state distribution.

We start with \eqref{recurc}, i.e.,
\begin{align*}
\LL{W_{i+1}}(s) =\LL{\VV_iW_i+B_i-A_i}(s) +1-\LL{\WA_i}(s), \quad i=0,1,\dots,
\end{align*}
where as before $\WA_i=\aminus{\VV_iW_i+B_i-A_i}$. This time the distribution of $\WA_i$ is almost trivial: either $\VV_iW_i+B_i-A_i\geqslant 0$, in which case $\WA_i=0$, or $\WA_i$ has the same exponential distribution as $A_i$, due to the lack of memory property of the exponential distribution.

Using the independence between $\seq A$, $\seq B$, and $\seq \VV$ and the exponentiality of the $A_i$,
we obtain
\begin{align}\nonumber\LL{W_{i+1}}(s) &= \LL{V_iW_i}(s)\LL{B}(s)\frac{\la}{\la -s} + 1- \PP(\VV_{i}W_{i}+B_i-A_i<0) \frac{\la}{\la-s}-\PP(\VV_{i}W_{i}+B_i-A_i\geqslant  0)  \nonumber \\
&= \LL{V_iW_i}(s)\LL{B}(s)\frac{\la}{\la -s}  -  p_{i+1} \frac{s}{\la-s},\label{REC}
\end{align}
where we set $p_i:= \PP(W_{i}=0)$. Our goal is to write \eqref{REC} fully in terms of the functions $\LL{W_i}(s)$. To this end, performing the change of variable $v:=su$, we obtain
\begin{align}
\LL{V_iW_i}(s) &=  \int_0^1 \ee{\e^{-suW_i}}\,{\rm d}u=\frac{1}{s} \int_0^s \LL{W_i}(v)\,{\rm d}v.
\label{uni-eq}
\end{align}
By multiplying with $\la-s$, we thus obtain the following recursive integral equation.

\begin{lemma}
For $i\in{\mathbb N}$,
\begin{align}\label{maineq}
\LL{W_{i+1}}(s) = \frac{\la\,\LL{B}(s)}{s(\la-s)} \int_0^{s}
\LL{W_i}(v)\,{\rm d}v-\frac{s}{\la-s}p_{i+1}.
\end{align}
\end{lemma}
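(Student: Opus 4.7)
The lemma collates the two displayed identities immediately preceding its statement — namely \eqref{REC} and \eqref{uni-eq} — into a single recursive integral equation for the transforms $\LL{W_i}$. The plan of proof is therefore essentially a one-line substitution. Starting from \eqref{REC},
\begin{align*}
\LL{W_{i+1}}(s) = \LL{\VV_iW_i}(s)\,\LL{B}(s)\,\frac{\la}{\la-s} \,-\, p_{i+1}\,\frac{s}{\la-s},
\end{align*}
I would plug in the integral representation \eqref{uni-eq},
\begin{align*}
\LL{\VV_iW_i}(s)=\frac{1}{s}\int_0^s \LL{W_i}(v)\,\dd v,
\end{align*}
and gather the prefactors $1/s$, $\LL B(s)$, and $\la/(\la-s)$ into $\la\,\LL B(s)/\bigl[s(\la-s)\bigr]$ in front of the integral. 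The term $-p_{i+1}\,s/(\la-s)$ is unchanged, and the claim \eqref{maineq} follows. (Equivalently, as suggested by the phrase preceding the lemma, one can first multiply \eqref{REC} through by $\la-s$ before substituting, which avoids carrying the pole at $s=\la$ through the algebra, and then divide back at the end.)

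Since both input identities are already established in the paragraphs immediately preceding the lemma, there is nothing further to do. For the reader's convenience I would nevertheless briefly reiterate the two small ingredients behind them. The derivation of \eqref{REC} uses the exponentiality of $A_i$ and its independence of $\VV_iW_i+B_i$: by the memoryless property, on the event $\{\VV_iW_i+B_i-A_i<0\}$ — which coincides with $\{W_{i+1}=0\}$ — the random variable $-\WA_i$ is again $\exp(\la)$-distributed, and a short algebraic manipulation collapses $1-\LL{\WA_i}(s)$ into $-p_{i+1}\,s/(\la-s)$. The derivation of \eqref{uni-eq} is just the change of variable $v=us$ in $\LL{\VV_iW_i}(s)=\int_0^1\LL{W_i}(us)\,\dd u$, the latter being an application of Fubini combined with the independence of $\VV_i$ and $W_i$ and the uniformity of $\VV_i$ on $[0,1]$.

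There is no genuine obstacle; the lemma is a book-keeping step packaging two already-derived facts into a form convenient for the subsequent analysis. The only care worth mentioning concerns the domain of $s$ on which the identity is asserted: one needs $\re s \leqslant 0$ (or more generally $\re s<\la$ with $s\neq 0$) so that $\la/(\la-s)$, the transforms $\LL B(s)$ and $\LL{W_i}(v)$, and the path integral $\int_0^s\cdot\,\dd v$ are all simultaneously well-defined; this is consistent with the conventions adopted throughout the earlier sections of the paper, and the resulting identity extends to $s=0$ by continuity.
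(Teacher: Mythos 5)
Your proposal is correct and coincides with the paper's own (implicit) proof: the lemma is obtained exactly by substituting the change-of-variable identity \eqref{uni-eq} into \eqref{REC}, both of which are derived in the lines immediately preceding the lemma. One minor caveat: your closing remark about the domain is slightly off --- since $W_i$ and $B$ are non-negative, their LSTs live in $\re s \geqslant 0$, and the paper uses \eqref{maineq} precisely there (e.g.\ at $s=\la$ and for $0<s<\la$), not for $\re s \leqslant 0$; this does not affect the validity of the algebraic derivation.
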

Since the LST $\LL{W_0}(s)=\e^{-sw}$ of $W_0$ is known, Relation (\ref{maineq}) in principle allows us to recursively determine all the transforms $\LL{W_i}(\cdot)$, $i\in{\mathbb N}$.
 Observe that, when $s=\la$, the right-hand side should become zero; using (\ref{uni-eq}) we obtain
\begin{align}\label{47a}
p_{i+1} = \frac{\LL{B}(\la)}{\la} \int_0^{\la} \LL{W_i}(v)\,{\rm d}v.
\end{align}
This formula can easily be interpreted probabilistically, using the memoryless property of the exponential distribution for $A_i$:
\begin{align*}
\PP(W_{i+1}=0) = P(A_i \geqslant B_i + V_iW_i) = \PP(A_i \geqslant B_i) P(A_i \geqslant V_iW_i) = \LL{B}(\la) \LL{V_iW_i}(\la).
\end{align*}
It is not possible to obtain explicit expressions for $\LL{W_i}$. However, as so often, one can utilize the method of generating functions to turn the recursion \eqref{maineq} into some sort of differential or integral equation.   Therefore we multiply Equation \eqref{maineq} by $r^{i+1}$ and sum over $i$ to obtain
\begin{align}
\gen W(r,s)= \frac{\la r\,\LL{B}(s)}{s(\la-s)} \int_0^{s}
\gen W(r,v)\,{\rm d}v+K(s),
\label{UW}
\end{align}
where, to simplify the notation,
\begin{align}\label{kay}
K(s)=\LL{W_0}(s)-\frac{s}{\la-s}\kla{\gen W(r,\infty)-p_0}.
\end{align}
With $I(s)=\int_0^{s}\gen W(r,v)\,{\rm d}v$ we obtain the linear first order differential equation
\begin{align}\label{machine}
I'(s) = \frac{\la r\LL{B}(s)}{s(\la-s)}I(s)+K(s).
\end{align}
As follows by standard techniques, this inhomogeneous differential equation is solved by
\begin{align}\label{ddI}
I(s)=\fexp{\la r \int_c^s\frac{\LL{B}(t)}{t(\la-t)}\,{\rm d}t }
\kla{\theta(c)+\int_c^s K(u)\,
\fexp{-\la r\int_c^u\frac{\LL{B}(t)}{t(\la-t)}\,dt } \,{\rm d}u},
\end{align}
where necessarily $\theta(c)=I(c)$ and we assume for the time being that $c\in(s,\la)$ if  $s<\la$ and $c\in(\la,s)$ if $\la<s$. Since $\LL B$ is bounded and bounded away from zero, we have as $s\downarrow \la$ (and likewise if $s\uparrow \la$ in the case where $s<\la$)
\begin{align*}
\fexp{\la r \int_c^s\frac{\LL{B}(t)}{t(\la-t)}\,{\rm d}t }\to \infty.
\end{align*}
As a consequence, to make sure $\gen W(r,\lambda)$ remains bounded,
\begin{align*}
\theta(c)=-\int_c^\la K(u)\,
\fexp{-\la r\int_c^u\frac{\LL{B}(t)}{t(\la-t)}\,{\rm d}t } \,{\rm d}u,
\end{align*}
and therefore
\begin{align}
I(s)&=\fexp{\la r \int_c^s\frac{\LL{B}(t)}{t(\la-t)}\,{\rm d}t }
\int_\la^s K(u)\,
\fexp{-\la r\int_c^u\frac{\LL{B}(t)}{t(\la-t)}\,{\rm d}t } \,{\rm d}u \nonumber\\
&=\int_\la^s K(u)\,
\fexp{\la r\int_u^s\frac{\LL{B}(t)}{t(\la-t)}\,{\rm d}t } \,{\rm d}u.\label{apple}
\end{align}
It is to be noted that the integrand in \eqref{apple} tends to $\infty$ as $s\to\la$, which follows from the finiteness of $I(\la)$. Inserting \eqref{apple} into \eqref{machine}, recalling hat $\gen W(r,s)=I'(s)$, yields the following expression for the generating function $\gen W$:
\begin{align}\label{applejuice}
\gen W(r,s)=K(s)+ \frac{\la r\LL{B}(s)}{s(\la-s)}\int_\la^s K(u)\,
\fexp{\la r\int_u^s\frac{\LL{B}(t)}{t(\la-t)}\,{\rm d}t } \,{\rm d}u.
\end{align}
 Plugging \eqref{kay} into  \eqref{applejuice}   we obtain
\begin{align*}
\gen W(r,s)=K(s)+ \frac{\la r\LL{B}(s)}{s(\la-s)}\int_\la^s \kla{\LL{W_0}(u)-\frac{u\kla{\gen W(r,\infty)-p_0}}{\la-u}}\,
\fexp{\la r\int_u^s\frac{\LL{B}(t)}{t(\la-t)}\,{\rm d}t } \,{\rm d}u,
\end{align*}
It remains to determine $\gen W(r,\infty)$. Keeping in mind that $\gen W(r,s)-K(s)\to 0$ by \eqref{kay} we obtain after some rearrangements,
\begin{align*}
\gen W(r,\infty)=p_0-\frac{\int_\la^\infty \LL{W_0}(u)
\fexp{-\la r\int_u^\infty\frac{\LL{B}(t)}{t(t-\la)}\,{\rm d}t } \,{\rm d}u}{\int_\la^\infty \frac{u}{u-\la}\,
\fexp{-\la r\int_u^\infty\frac{\LL{B}(t)}{t(t-\la)}\,{\rm d}t } \,{\rm d}u}.
\end{align*}
We summarize our findings in the following theorem.

\begin{theorem} For $r\in(0,1)$,
\begin{align}\label{bs}
\gen W(r,s)=K(s)+ \frac{\la r\LL{B}(s)}{s(\la-s)}\int_\la^s K(u)\,
\fexp{\la r\int_u^s\frac{\LL{B}(t)}{t(\la-t)}\,{\rm d}t } \,{\rm d}u.
\end{align}
where
\begin{align*}
K(s)=\LL{W_0}(s)+\frac{\frac{s}{\la-s}\int_\la^\infty \LL{W_0}(u)
\fexp{-\la r\int_u^\infty\frac{\LL{B}(t)}{t(t-\la)}\,{\rm d}t } \,{\rm d}u}{\int_\la^\infty \frac{u}{u-\la}\,
\fexp{-\la r\int_u^\infty\frac{\LL{B}(t)}{t(t-\la)}\,{\rm d}t } \,{\rm d}u}.
\end{align*}
\end{theorem}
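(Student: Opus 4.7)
The plan is to follow the exposition leading up to the theorem: turn the transient recursion into an integral equation for $\gen W(r,\cdot)$, convert it to a first-order linear ODE, solve that ODE by the integrating-factor method, and then pin down the two remaining free scalars (the constant of integration and the value $\gen W(r,\infty)$) by analytic regularity conditions at $s=\la$ and $s=\infty$.

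The preparatory step is to derive the scalar recursion \eqref{maineq}. Starting from \eqref{recurc}, uniformity of $V_i$ on $[0,1]$ rewrites $\LL{V_iW_i}(s)$ as the average $\frac{1}{s}\int_0^s \LL{W_i}(v)\,dv$, while the memoryless property of the exp($\la$)-distributed $A_i$ implies that $\WA_i$ is either zero or exp($\la$)-distributed. These two observations yield \eqref{REC} and then \eqref{maineq}. Multiplying \eqref{maineq} by $r^{i+1}$, summing over $i\geqslant 0$, and absorbing the initial and boundary data into the auxiliary function $K(s)$ defined in \eqref{kay} produces the identity \eqref{UW}. Setting $I(s)=\int_0^s \gen W(r,v)\,dv$ then converts \eqref{UW} into the first-order linear ODE \eqref{machine}, whose general solution is the integrating-factor formula \eqref{ddI}; this solution contains one free integration constant $\theta(c)=I(c)$ together with the unknown scalar $\gen W(r,\infty)$ hidden inside $K$.

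The heart of the argument is fixing these two unknowns by regularity. At $s=\la$ the ODE coefficient has a pole, and because $\LL B(\la)>0$ the integrating factor $\exp(\la r\int_c^s \LL B(t)/(t(\la-t))\,dt)$ diverges there; enforcing finiteness of $I(\la)$ selects the unique $\theta(c)$ that makes the bracket in \eqref{ddI} vanish at $s=\la$, giving \eqref{apple} and, upon differentiation, the formula \eqref{applejuice}. To pin down $\gen W(r,\infty)$ I would let $s\to\infty$: since $\LL{W_0}(s)=e^{-sw}\to p_0$ and $s/(\la-s)\to -1$, one has $K(s)\to \gen W(r,\infty)$, so the integral correction in \eqref{applejuice} must itself vanish at infinity. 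Writing this condition out explicitly yields a scalar linear equation whose solution for $\gen W(r,\infty)$, substituted back into \eqref{kay}, produces precisely the $K(s)$ displayed in the theorem.

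The main technical obstacle I anticipate is justifying the two limit arguments, in particular the convergence of the improper integrals $\int_\la^\infty \frac{u}{u-\la}\exp(-\la r\int_u^\infty \LL B(t)/(t(t-\la))\,dt)\,du$ and the legitimacy of interchanging the limits $s\to\la$ and $s\to\infty$ with the integrating-factor integrals. Once these analytic issues are cleared, the remainder of the proof is a sequence of routine manipulations.
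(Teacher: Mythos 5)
Your proposal is correct and follows essentially the same route as the paper: the paper derives \eqref{maineq}, sums to get \eqref{UW}, solves the resulting ODE \eqref{machine} by an integrating factor, fixes the integration constant by finiteness at $s=\la$, and determines $\gen W(r,\infty)$ from $\gen W(r,s)-K(s)\to 0$ as $s\to\infty$, exactly as you outline. The theorem is then just a summary of that computation, so no new ideas are needed beyond the regularity arguments you already flag.
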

We already noted that since the $\VV_i$ are uniformly distributed on $[0,1]$ we have $\ee{\log\abs{V}}<0$. Hence we always have $W_i \dto W$ as  $i\to\infty$ for some proper random variable $W$. Its LST is given in the following theorem.

\begin{theorem} $W_i$ converges weakly to a proper limit $W$ as $i\to\infty$, and
\begin{align}\label{LL}
\LL W(s)=\frac{p_\infty}{s-\la}\kla{s- \frac{\la \LL{B}(s)}{s}\int_\la^s \frac{u}{u-\la}\,
\fexp{\la\int_u^s\frac{\LL{B}(t)}{t(\la-t)}\,{\rm d}t } \,{\rm d}u}.
\end{align}
where
\begin{align}
\label{K}
p_\infty
= \left[ \int_0^\la \frac{1}{\la-u}\,
\fexp{-\int_0^u\kla{\frac{\la\LL{B}(t)}{t(\la-t)}-\frac{1}{t} }\,{\rm d}t} \,{\rm d}u\right]^{-1}.
\end{align}

\end{theorem}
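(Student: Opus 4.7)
I would work directly in stationarity rather than Abelianise the transient formula \eqref{bs}. Since $\VV$ is uniformly distributed on $[0,1]$ we have $\EE\log|\VV|=-1<0$, so Theorem~\ref{theo2} guarantees $W_i\dto W$ for a proper limit~$W$. Letting $i\to\infty$ in \eqref{maineq} and using dominated convergence ($|\LL{W_i}(v)|\leqslant 1$ for $\re v\geqslant 0$), together with $p_{i+1}\to p_\infty:=\PP(W=0)$, yields the stationary equation
\begin{align*}
\LL W(s) = \frac{\la\,\LL{B}(s)}{s(\la-s)}\int_0^s \LL W(v)\,{\rm d}v - \frac{s}{\la-s}\,p_\infty.
\end{align*}

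Setting $J(s):=\int_0^s \LL W(v)\,{\rm d}v$, so that $J'(s)=\LL W(s)$, this becomes the linear first-order inhomogeneous ODE
\begin{align*}
J'(s) - \frac{\la\,\LL{B}(s)}{s(\la-s)}\,J(s) = -\frac{s\,p_\infty}{\la-s}.
\end{align*}
I would solve it by the classical integrating factor $M(s):=\fexp{-\int_c^s \frac{\la\LL{B}(t)}{t(\la-t)}\,{\rm d}t}$ for a reference point $c\in(0,\la)$, obtaining a one-parameter family indexed by the constant $J(c)$. Since $\frac{\la\LL{B}(t)}{t(\la-t)}\sim \frac{\LL{B}(\la)}{\la-t}$ as $t\to\la$, the reciprocal $1/M(s)\sim (\la-s)^{-\LL{B}(\la)}$ blows up at $s=\la$, whereas $J(\la)\leqslant \la$ is finite; this forces the free constant to take a specific value, leaving
\begin{align*}
J(s) = p_\infty \int_s^\la \frac{u}{\la-u}\,\fexp{\int_u^s \frac{\la\LL{B}(t)}{t(\la-t)}\,{\rm d}t}\,{\rm d}u.
\end{align*}
Feeding this back into the ODE and flipping the range of integration from $[s,\la]$ to $[\la,s]$ (which swaps $\la-u$ for $u-\la$) produces formula~\eqref{LL}, still containing the unknown $p_\infty$.

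To pin down $p_\infty$ I would enforce the normalisation $\LL W(0)=1$, equivalently $J'(0)=1$. The integrating factor has a $1/t$ singularity at the origin, which I would isolate by writing $\frac{\la\LL{B}(t)}{t(\la-t)}=\frac{1}{t}+g(t)$ with $g(t):=\frac{\la\LL{B}(t)-(\la-t)}{t(\la-t)}$; since $\LL{B}(0)=1$, a short Taylor expansion shows that $g$ is bounded near~$0$. Consequently $\fexp{\int_u^s \frac{\la\LL{B}(t)}{t(\la-t)}\,{\rm d}t}=(s/u)\fexp{\int_u^s g(t)\,{\rm d}t}$, so that $J(s)=p_\infty\,s\int_s^\la\frac{1}{\la-u}\fexp{\int_u^s g(t)\,{\rm d}t}\,{\rm d}u$ has an explicit factor of~$s$. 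Computing $J'(0)=\lim_{s\downarrow 0}J(s)/s$ and setting the result equal to~$1$ yields exactly~\eqref{K}.

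The main obstacle I anticipate is the careful bookkeeping around the two singularities of the integrating factor, at $s=0$ and at $s=\la$. In particular, one must justify that boundedness of $J(\la)$ alone really does pin down the homogeneous constant in the ODE; a useful consistency check is that this is equivalent to the stationary version of the probabilistic identity~\eqref{47a}, namely $\LL{B}(\la)\,J(\la)=\la\,p_\infty$, which can be read off from the asymptotics $M(s)\sim C(\la-s)^{\LL{B}(\la)}$ as $s\uparrow\la$ by evaluating $J(s)=p_\infty M(s)^{-1}\int_s^\la\frac{u}{\la-u}M(u)\,{\rm d}u$ at $s=\la$ via l'H\^opital.
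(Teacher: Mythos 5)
Your argument is correct, but it takes a genuinely different route from the paper. The paper first establishes the transient formula \eqref{bs} for $\gen W(r,s)$ and then obtains \eqref{LL} by the Abelian step $\LL W(s)=\lim_{r\uparrow 1}(1-r)\gen W(r,s)$, using $\lim_{r\uparrow 1}(1-r)K(s)=sp_\infty/(s-\la)$, with $p_\infty$ then fixed by $\LL W(0)=1$ through exactly the same isolation of the $1/t$ singularity that you propose. You instead bypass the generating function entirely: you pass to the limit $i\to\infty$ in \eqref{maineq}, obtain the stationary integral equation (this is precisely \eqref{lim}, which the paper itself writes down in the subsequent Remark), convert it into the first-order ODE for $J(s)=\int_0^s\LL W(v)\,{\rm d}v$, and kill the homogeneous solution by the boundedness of $J(\la)$ — the same device the paper applies at the transient level to determine $\theta(c)$. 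What your route buys is a self-contained steady-state proof that avoids the Abelian theorem and the transient machinery; what it costs is the transient information and a small extra justification that $p_{i+1}\to p_\infty=\PP(W=0)$, which weak convergence alone does not give for an atom, but which follows in one line from \eqref{47a} together with dominated convergence and the stationary version of that identity, $p_\infty=\frac{\LL B(\la)}{\la}\int_0^\la\LL W(v)\,{\rm d}v$ — the very relation you note as the consistency check $\LL B(\la)J(\la)=\la p_\infty$. With that line added, your computation of the integrating factor, the forced constant, the sign flip from $\int_s^\la$ to $\int_\la^s$, and the evaluation of $J'(0)=1$ reproduce \eqref{LL} and \eqref{K} exactly.
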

\begin{proof}
We apply an Abelian theorem and obtain \eqref{LL} after multiplying  both sides of (\ref{bs}) by $1-r$ and letting $r$ tend to one. We thereby use the fact that
\begin{align*}
\lim_{r \uparrow 1} (1-r) K(s)=\frac{sp_\infty}{s-\la}.
\end{align*}
The relation for $p_\infty$ follows by noting that $\LL W(0)=1$, so that
\begin{align*}
\frac{1}{p_\infty}
&=\lim_{s\downarrow 0} \frac{1}{s-\la}\kla{s- \frac{\la \LL{B}(s)}{s}\int_\la^s \frac{u}{u-\la}\,
\fexp{\la\int_u^s\frac{\LL{B}(t)}{t(\la-t)}\,{\rm d}t } \,{\rm d}u}\\
&=\lim_{s\downarrow 0}  \frac{1}{s}\int_s^\la \frac{u}{\la-u}\,
\fexp{-\la\int_s^u\frac{\LL{B}(t)}{t(\la-t)}\,{\rm d}t } \,{\rm d}u
\\
&=\lim_{s\downarrow 0}  \int_s^\la \frac{1}{\la-u}\,
\fexp{-\int_s^u\kla{\frac{\la\LL{B}(t)}{t(\la-t)}-\frac{1}{t} }\,{\rm d}t} \,{\rm d}u .
\end{align*}
Now
\begin{align*}
\frac{\la\LL{B}(t)}{t(\la-t)}-\frac{1}{t}\to \frac{1}{\la}-\ee B,
\end{align*}
as $t\downarrow 0$, so that we can safely let $s\downarrow 0$ and obtain the finite and non-zero limit \eqref{K}.
\end{proof}

\begin{remark} The expected value $\ee W$ can be expressed in terms of the parameters $\la$, $\ee B$ and $p_\infty$ as follows. Letting $i\to\infty$ in \eqref{REC} yields
\begin{align}
\LL{W}(s)
&= \LL{VW}(s)\LL{B}(s)\frac{\la}{\la -s}  -   \frac{s}{\la-s}p_{\infty}.\label{lim}
\end{align}
After a rearrangement of terms this becomes
\begin{align*}
p_{\infty}=\LL{W}(s)+ \la\frac{1-\LL W(s)+\LL{VW+B}(s)-1}{s}.
\end{align*}
As $s\downarrow 0$ the right-hand side tends to
$1+\la\kla{\ee W-\ee{VW+B}}$ and since $\ee{VW+B}=\ee{V}\ee{W}+\ee{B}=\frac{1}{2}\ee{W}+\ee{B}$, we obtain
\begin{align}
\ee W=2\kla{\ee B-\frac{1-p_\infty}{\la}}.
\label{EwpW0new}
\end{align}
This yields the inequality $\pp{W=0}\geqslant 1-\la\ee B$, the value that one would get if $V_i \equiv 1$.
\end{remark}

\begin{example} Even if the $B_i$ are exp($\mu$) distributed the computation of $\gen W(r,s)$ becomes  quite involved. By means of a partial fraction expansion, after considerable calculus we obtain
\begin{align*}
\frac{u}{\la-u}\fexp{\la\int_u^s\frac{\LL{B}(t)}{t(\la-t)}\,{\rm d}t }=\frac{s}{\la-u} \left(\frac{\la -u}{\la -s}\right)^{\frac{\mu}{\la + \mu}} \left(\frac{\mu+u}{\mu+s}\right)^{\frac{\la}{\la+\mu}}.
\end{align*}
For $s<\la$ the integral can be expressed in terms of the  incomplete beta function $B(x,a,b)=\int_0^x u^{a-1}(1-u)^{b-1}\,du$:
\begin{align*}
\int_\la^s \frac{s}{u - \la} \left(\frac{\la -u}{\la -s}\right)^{\frac{\mu}{\la + \mu}} \left(\frac{\mu+u}{\mu+s}\right)^{\frac{\la}{\la+\mu}}\,{\rm d}u
&=\frac{s(\la+\mu)}{(\mu+s)^{\frac{\la}{\la+\mu}}(\la -s)^{\frac{\mu}{\la + \mu}}}
B\kla{\tfrac{\la-s}{\la+\mu},\tfrac{\mu}{\la+\mu},1+\tfrac{\la}{\la+\mu}}.
\end{align*}
We then obtain
\begin{align*}
\LL W(s)=p_\infty\cdot \kla{\frac{\la \mu(\la+\mu)B\kla{\tfrac{\la-s}{\la+\mu},\tfrac{\mu}{\la+\mu},1+\tfrac{\la}{\la+\mu}}}{(\mu+s)^{1+\frac{\la}{\la+\mu}}(\la -s)^{1+\frac{\mu}{\la + \mu}}}
-\frac{s}{\la-s}}.
\end{align*}
This leads to
\begin{align*}
p_\infty=\frac{\mu^{\frac{\la}{\la+\mu}}\la^{\frac{\mu}{\la + \mu}}}{(\la+\mu)B\kla{\tfrac{\la}{\la+\mu},\tfrac{\mu}{\la+\mu},1+\tfrac{\la}{\la+\mu}}},
\end{align*}
so that, at least for $s<\la$,
\begin{align}\label{H}
\LL W(s)=\frac{\mu^{\frac{\la}{\la+\mu}}\la^{\frac{\mu}{\la + \mu}}}{(\la+\mu)B\kla{\tfrac{\la}{\la+\mu},\tfrac{\mu}{\la+\mu},1+\tfrac{\la}{\la+\mu}}}\cdot \kla{\frac{\la \mu(\la+\mu)B\kla{\tfrac{\la-s}{\la+\mu},\tfrac{\mu}{\la+\mu},1+\tfrac{\la}{\la+\mu}}}{(\mu+s)^{1+\frac{\la}{\la+\mu}}(\la -s)^{1+\frac{\mu}{\la + \mu}}}
-\frac{s}{\la-s}}.
\end{align}

Unfortunately a similar expression for the $s>\la$ case is not available. Instead one obtains expressions that involve hypergeometric functions. Also it seems very hard to obtain higher moments from \eqref{H} by means of differentiation. It is possible, however, to derive a recursion formula for the moments $\omega_k:=\ee{W^k}$ (where we assume their existence for $k=1,2,\ldots,j$, say) if we start with \eqref{lim}, which in our example becomes
\begin{align}\label{lex}
(\la-s)\LL{W}(s) = \frac{\la\mu}{s(\mu+s)} \int_0^{s}
\LL{W}(v)\,{\rm d}v-sp_{\infty}.
\end{align}
For $s<0$ the expansion
$\LL{W}(-s)=\sum_{k=0}^j \frac{\omega_k}{k!}s^k+o(s^j)$ holds. Inserting this into \eqref{lex} yields
\begin{align*}
(\mu-s)(\la+s)\sum_{k=0}^j \frac{\omega_k}{k!}s^k+o(s^j) = \la\mu
\sum_{k=0}^j \frac{\omega_k}{(k+1)!}s^{k}-s(\mu+s)p_{\infty}+o(s^j), ~~~s \downarrow 0.
\end{align*}
Equating the coefficients on both sides leads to
\begin{align*}
&\la\mu\frac{\omega_k}{k!}+ (\mu-\la)\frac{\omega_{k-1}}{(k-1)!}-\frac{\omega_{k-2}}{(k-2)!}\ind{k\geqslant 2}
\\&=\kla{\la\mu\frac{\omega_1}{2}-\mu p_\infty}\ind{k=1}+\kla{\la\mu
\frac{\omega_2}{6}-p_\infty} \ind{k=2}+ \frac{\la\mu
\omega_k}{(k+1)!}\ind{k\geqslant 3}
,\quad k\in\{1,\ldots,j\}.
\end{align*}
Then, in accordance with the general result  \eqref{EwpW0new},
\begin{align*}
\omega_1&= 2\kla{\frac{1}{\mu}-\frac{1-p_\infty}{\la}}.
\intertext{Moreover,}
\omega_2&=3\frac{1-p_\infty-(\mu-\la)\omega_1}{\mu\la},\\
\intertext{and}\omega_k
&=\frac{(k^2-1)\omega_{k-2}-(k+1)(\mu-\la)\omega_{k-1}}{\la\mu},\quad k\in\{3,\ldots,j\}.
\end{align*}
\end{example}

\section{Discussion and concluding remarks}
\label{concl}

This paper has analyzed three reflected autoregressive processes specified by the stochastic recursion $W_{i+1} = [V_iW_i + B_i - A_i]^+$.
While the classical case of $V\equiv 1$ has been widely studied in the queueing literature,
our more general setting allows explicit analysis only in special cases.
The three special cases we have considered are: (i)~$V$ attains negative values only
and $B$ has a rational LST, (ii) $V$ equals a positive value $a$ with certain probability
$p\in (0,1)$ and is negative otherwise, and both $A$ and $B$ have a rational LST,
(iii) $V$ is uniformly distributed on $[0,1]$, and $A$ is exponentially distributed.
In all three cases we present transient and stationary results,
where the transient results are in terms of the transform at a geometrically distributed epoch.

Cases which might allow explicit analysis are, for example:
\begin{enumerate}
\item\label{z1} A generalization of Model III to the case in which $V=U^{1/\alpha}$, where $U$ has a uniform distribution on $[0,1]$ and $\al>0$. In this case \eqref{maineq} becomes
\begin{align*}
s^{\al-1}\LL{W}(s) = \frac{\al\la\,\LL{B}(s)}{s(\la-s)} \int_0^{s}
v^{\alpha-1}\LL{W}(v)\,{\rm d}v-\frac{s^\al}{(\la-s)} p_{i+1}.
\end{align*}
Letting $\gen W\alf(r,s)=s^{\al-1}\gen W(r,s)$ this yields
\begin{align}\label{UWA}
\gen{W}\alf(r,s)= \frac{\la r\al\,\LL{B}(s)}{s(\la-s)} \int_0^{s}
\gen{W}\alf(r,v)\,{\rm d}v+K\alf (s),
\end{align}
where
\[K\alf(s)=s^{\al-1}\LL{W_0}(s)-\frac{s^\al}{\la-s}(\gen W\alf(r,\infty)-p_0).\] Equation \eqref{UWA} is of the exact same type as \eqref{UW}, only with $r$ replaced by $r\al$.
This allows one to derive $\gen W\alf(r,s)$ in the same way as before.
\item
A combination of Models I and III,
allowing $V$ to be either negative or having a distribution as in \ref{z1}.
\item
One might consider more general recursions, such as
the high-order Lindley equations analyzed in, e.g.,
\cite{Biggins_98, Kar_Kel_Suh_94, Olv_Rui_19}.
\end{enumerate}

Another possible line of research concerns scaling limits and asymptotics.
In particular, tail asymptotics
seem to be within reach; in heavy-tailed cases these may be identified relying on a Tauberian approach.
One also anticipates that, under particular scalings, an explicit analysis is possible.
Specifically, one would expect that a diffusion analysis similar to the one
presented in \cite{BMR} can be performed.

\bibliographystyle{abbrvnat}
\bibliography{lit}

\begin{thebibliography}{23}
\providecommand{\natexlab}[1]{#1}
\providecommand{\url}[1]{\texttt{#1}}
\expandafter\ifx\csname urlstyle\endcsname\relax
  \providecommand{\doi}[1]{doi: #1}\else
  \providecommand{\doi}{doi: \begingroup \urlstyle{rm}\Url}\fi

\bibitem[Asmussen(2008)]{asm}
S.~Asmussen.
\newblock \emph{Applied {P}robability and {Q}ueues}.
\newblock Springer, 2008.

\bibitem[Biggins(1998)]{Biggins_98}
J.~Biggins.
\newblock {Lindley-type equations in the branching random walk}.
\newblock \emph{Stochastic Processes and their Applications}, 75:\penalty0
  105--133, 1998.

\bibitem[Borovkov(1998)]{bor1}
A.~A. Borovkov.
\newblock \emph{{Ergodicity and Stability of Stochastic Processes}}.
\newblock Wiley, 1998.

\bibitem[Borovkov and Foss(1992)]{Foss1}
A.~A. Borovkov and S.~G. Foss.
\newblock {Stochastically recursive sequences and their generalizations}.
\newblock \emph{Siberian Advances in Mathematics}, 2\penalty0 (1):\penalty0
  16--81, 1992.

\bibitem[Boxma and Vlasiou(2007)]{BoxmaVlasiou}
O.~Boxma and M.~Vlasiou.
\newblock {On queues with service and interarrival times depending on waiting
  times}.
\newblock \emph{Queueing Systems}, 56:\penalty0 121--132, 2007.

\bibitem[Boxma et~al.(2016)Boxma, Mandjes, and Reed]{BMR}
O.~Boxma, M.~Mandjes, and J.~Reed.
\newblock {On a class of reflected AR(1) processes}.
\newblock \emph{Journal of Applied Probability}, 53\penalty0 (3):\penalty0
  818--832, 2016.

\bibitem[Boxma et~al.(2020 (to appear))Boxma, L\"opker, and Mandjes]{WOM}
O.~Boxma, A.~L\"opker, and M.~Mandjes.
\newblock {On two classes of reflected autoregressive processes}.
\newblock \emph{Journal of Applied Probability}, 57\penalty0 (2), 2020 (to
  appear).

\bibitem[Brandt(1986)]{AB}
A.~Brandt.
\newblock {The stochastic equation $Y_{n+ 1}= A_n Y_n+ B_n$ with stationary
  coefficients}.
\newblock \emph{Advances in Applied Probability}, 18\penalty0 (1):\penalty0
  211--220, 1986.

\bibitem[Cohen(1975)]{Cohen75}
J.~W. Cohen.
\newblock {The Wiener-Hopf technique in applied probability}.
\newblock \emph{Journal of Applied Probability}, 12\penalty0 (S1):\penalty0
  145--156, 1975.

\bibitem[Cohen(1982)]{cohen}
J.~W. Cohen.
\newblock \emph{{The Single Server Queue}}.
\newblock North Holland, 1982.

\bibitem[Devroye(2001)]{devroye}
L.~Devroye.
\newblock {Simulating perpetuities}.
\newblock \emph{Methodology and Computing in Applied Probability}, 3\penalty0
  (1):\penalty0 97--115, 2001.

\bibitem[Diaconis and Freedman(1999)]{diaconis}
P.~Diaconis and D.~Freedman.
\newblock {Iterated Random Functions}.
\newblock \emph{SIAM {R}eview}, 41\penalty0 (1):\penalty0 45--76, 1999.

\bibitem[Embrechts and Goldie(1994)]{embr}
P.~Embrechts and C.~Goldie.
\newblock {Perpetuities and Random Equations}.
\newblock In \emph{Asymptotic Statistics}, pages 75--86. Springer, 1994.

\bibitem[Foss and Konstantopoulos(2004)]{sergeytakis}
S.~Foss and T.~Konstantopoulos.
\newblock {An overview of some stochastic stability methods}.
\newblock \emph{Journal of the Operations Research Society of Japan},
  47\penalty0 (4):\penalty0 275--303, 2004.

\bibitem[Foss et~al.(2018)Foss, Shneer, Thomas, and Worrall]{seva}
S.~Foss, V.~Shneer, J.~P. Thomas, and T.~Worrall.
\newblock {Stochastic stability of monotone economies in regenerative
  environments}.
\newblock \emph{Journal of Economic Theory}, 173:\penalty0 334--360, 2018.

\bibitem[Horst(2001)]{horst}
U.~Horst.
\newblock {The stochastic equation $Y_{t+ 1}= A_t Y_t+ B_t$ with non-stationary
  coefficients}.
\newblock \emph{Journal of Applied Probability}, 38\penalty0 (1):\penalty0
  80--94, 2001.

\bibitem[Karpelevich et~al.(1994)Karpelevich, Kelbert, and
  Suhov]{Kar_Kel_Suh_94}
F.~Karpelevich, M.~Kelbert, and Y.~Suhov.
\newblock {Higher-order {L}indley equations}.
\newblock \emph{Stochastic Processes and their Applications}, 53:\penalty0
  65--96, 1994.

\bibitem[Kesten(1973)]{kesten}
H.~Kesten.
\newblock {Random difference equations and renewal theory for products of
  random matrices}.
\newblock \emph{Acta Mathematica}, 131:\penalty0 207--248, 1973.

\bibitem[Olvera-Cravioto and Ruiz-Lacedelli(2018)]{Olv_Rui_19}
M.~Olvera-Cravioto and O.~Ruiz-Lacedelli.
\newblock {Parallel queues with synchronization}.
\newblock \emph{ArXiv:1501.00186}, 2018.

\bibitem[Titchmarsh(1968)]{Titchmarsh}
E.~C. Titchmarsh.
\newblock \emph{{The Theory of Functions}}.
\newblock Oxford University Press, 1968.

\bibitem[Vervaat(1979)]{vervaat}
W.~Vervaat.
\newblock {On a stochastic difference equation and a representation of
  non--negative infinitely divisible random variables}.
\newblock \emph{Advances in Applied Probability}, 11\penalty0 (4):\penalty0
  750--783, 1979.

\bibitem[Vlasiou(2006)]{VlasiouPhD}
M.~Vlasiou.
\newblock {Lindley-type recursions}.
\newblock \emph{PhD thesis, Eindhoven University of Technology}, 2006.

\bibitem[Whitt(1990)]{whitt}
W.~Whitt.
\newblock {Queues with service times and interarrival times depending linearly
  and randomly upon waiting times}.
\newblock \emph{Queueing Systems}, 6\penalty0 (1):\penalty0 335--351, 1990.

\end{thebibliography}
\end{document}